        \title{$K$-theory and actions on Euclidean retracts}
       \author{Bartels, A.}
       \address{WWU M\"unster\\
               Mathematisches Institut\\
               Einsteinstr.~62,
               D-48149 M\"unster, Germany}
        \email{a.bartels@wwu.de}
        \urladdr{http://www.math.uni-muenster.de/u/bartelsa} 
         \date{December 2017}
     \keywords{Farrell-Jones Conjecture, 
            $K$- and $L$-theory of group rings.}
    \subjclass{18F25, 19G24, 20F67, 20F65}
  \DeclareMathAlphabet{\matheurm}{U}{eur}{m}{n}
  \newcommand{\IN}{\mathbb{N}}
  \newcommand{\IQ}{\mathbb{Q}}
  \newcommand{\IR}{\mathbb{R}}
  \newcommand{\IZ}{\mathbb{Z}}
  \newcommand{\cala}{\mathcal{A}}
  \newcommand{\calc}{\mathcal{C}}
  \newcommand{\calf}{\mathcal{F}}
  \newcommand{\calm}{\mathcal{M}}
  \newcommand{\calp}{\mathcal{P}}
  \newcommand{\calt}{\mathcal{T}}
  \newcommand{\calu}{\mathcal{U}}
  \newcommand{\calv}{\mathcal{V}}
  \newcommand{\calw}{\mathcal{W}}
  \newcommand{\bfK}{{\mathbf K}}
  \newcommand{\bfY}{{\mathbf Y}}
  \newcounter{commentcounter}
  \newcommand{\inred}[1]{#1}
  \newcommand{\ignore}[1]{}
  \theoremstyle{plain}
  \newtheorem{theorem}{Theorem}[section]
  \newtheorem{lemma}[theorem]{Lemma}
  \newtheorem{proposition}[theorem]{Proposition}
  \newtheorem{conjecture}[theorem]{Conjecture}
  \theoremstyle{definition}
  \newtheorem{definition}[theorem]{Definition}
  \newtheorem*{definition*}{Definition}
  \theoremstyle{remark}
  \newtheorem{remark}[theorem]{Remark}  
  \newtheorem{example}[theorem]{Example}
\let\c@equation=\c@theorem\makeatother
  \DeclareMathOperator{\Fin}{Fin}
  \DeclareMathOperator{\Mod}{Mod}
  \DeclareMathOperator{\Prob}{Prob}
  \DeclareMathOperator{\GL}{GL}
  \DeclareMathOperator{\SL}{SL}
  \DeclareMathOperator{\Wh}{Wh}
  \DeclareMathOperator{\VCyc}{VCyc}
  \newcommand{\CAT}{\operatorname{CAT}}
  \newcommand{\CF}{{\mathit{CF}}}
  \newcommand{\dd}{{\partial}}
  \newcommand{\e}{{\varepsilon}}
  \newcommand{\fol}{{\mathit{fol}}}
  \newcommand{\FS}{{\mathit{FS}}}
  \newcommand{\ox}{{\otimes}}
  \newcommand{\PMF}{{\mathit{\calp \hspace{-.6ex} \calm \hspace{-.1ex} \calf}}}
  \newcommand{\x}{{\times}}
\begin{document}

  \maketitle

  \begin{abstract}
    This note surveys axiomatic results for the Farrell-Jones Conjecture in terms of actions on Euclidean retracts and applications of these to $\GL_n(\IZ)$, relative hyperbolic groups and mapping class groups.
  \end{abstract}
  

  \section*{Introduction}
  
  Motivated by surgery theory Hsiang~\cite{Hsiang-ICM83} made a number of influential conjectures about the $K$-theory of integral group rings $\IZ[G]$ for torsion free groups $G$.
  These conjectures often have direct implications for the classification theory of manifolds of dimension $\geq 5$.
  A good example is the following.
  An $h$-cobordism is a compact manifold $W$ that has two boundary components $M_0$ and $M_1$ such that both inclusions $M_i \to W$ are homotopy equivalences.
  The Whitehead group $\Wh(G)$ is the quotient of $K_1(\IZ[G])$ by the subgroup generated by the canonical units $\pm g$, $g \in G$. 
  Associated to an $h$-cobordism is an invariant, the Whitehead torsion, in $\Wh(G)$, where $G$ is the fundamental group of $W$.  
  A consequence of the $s$-cobordism theorem is that for $\dim W \geq 6$, an $h$-cobordism $W$ is trivial (i.e., isomorphic to a product $M_0 \x [0,1]$) iff its Whitehead torsion vanishes.
  Hsiang conjectured that for $G$ torsion free $\Wh(G) = 0$, and thus that in many cases $h$-cobordisms are products.
  
  The Borel conjecture asserts that closed aspherical manifolds are topologically rigid, i.e., that any homotopy equivalence to another closed manifold is homotopic to a homeomorphism.
  The last step in proofs of instances of this conjecture via surgery theory uses a vanishing result for $\Wh(G)$ to conclude that an $h$-cobordism is a product and that therefore the two boundary components are homeomorphic. 
   
  Farrell-Jones~\cite{Farrell-Jones-Ktheory-dynamics-I} pioneered a method of using the geodesic flow on non-positively curved manifolds to study these conjectures.
  This created a beautiful connection between $K$-theory and dynamics that led Farrell-Jones~\cite{Farrell-Jones-TopRigigdityCompactNonPos}, among many other results, to a proof of the Borel Conjecture for   closed Riemannian manifolds of non-positive curvature of dimension $\geq 5$.   
  Moreover, Farrell-Jones~\cite{Farrell-Jones-IsomorphismConjectures} formulated (and proved many instances of) a conjecture about the structure of the algebraic $K$-theory (and $L$-theory) of group rings, even in the presence of torsion in the group.
  Roughly, the Farrell-Jones Conjecture states that the main building blocks for the $K$-theory of $\IZ[G]$ is the $K$-theory of $\IZ[V]$ where $V$ varies of the family of virtually cyclic subgroups of $G$.
  It implies a number of other conjectures, among them Hsiang's conjectures, the Borel Conjecture in  dimension $\geq 5$, the Novikov Conjecture on the homotopy invariant of higher signatures, Kaplansky's conjecture about idempotents in group rings, see~\cite{Lueck-ICM2010} for a summary of these and other applications. 
  
  My goal in this note is twofold. 
  The first goal is to explain a condition formulated in terms of existence of certain actions of $G$ on Euclidean retracts that implies the Farrell-Jones Conjecture for $G$.
  This condition was developed in joint work with L\"uck and Reich~\cite{Bartels-Lueck-Borel, Bartels-Lueck-Reich-FJ-hyperbolic} where the connection between $K$-theory and dynamics has been extended beyond the  context of Riemannian manifolds to prove the Farrell-Jones Conjecture for hyperbolic and $\CAT(0)$-groups.  
  The second goal is to outline how this condition has been used in joint work with L\"uck, Reich and R\"uping and with Bestvina to prove the Farrell-Jones Conjecture for $\GL_n(\IZ)$ and mapping class groups.
  A common difficulty for both families of groups is that their natural proper actions (on the associated symmetric space, respectively on Teichm\"uller space) is not cocompact.
  In both cases the solution depends on a good understanding of the action away from cocompact subsets and an induction on a complexity of the groups.
  As a preparation for mapping class groups we also discuss relatively hyperbolic groups.
  
  The Farrell-Jones Conjecture has a prominent relative, the Baum-Connes Conjecture for topological $K$-theory of group $C^*$-algebras~\cite{Baum-Connes-Geometric-K-foliations, Baum-Connes-Higson-class-spaces-proper-act-K}.
  The two conjectures are formally very similar, but methods of proofs are different. 
  In particular, the conditions discussed in Section~\ref{sec:actions} are not known to imply the Baum-Conjecture.
  The classes of groups for which the two conjectures are known differ. 
  For example, by work of Kammeyer-L\"uck-R\"uping~\cite{Kammeyer-Lueck-Rueping-FJ-lattices} all lattice in Lie groups satisfy the Farrell-Jones Conjecture; despite Lafforgues~\cite{Lafforgue-K-bivariante-Banach-BC} positive results for many property $T$ groups, the Baum-Connes Conjecture is still a challenge for $\SL_3(\IZ)$.
  Wegner~\cite{Wegner-FJsolvable} proved the Farrell-Jones Conjecture for all solvable groups, but the case of amenable (or just elementary amenable) groups is open;
  in contrast Higson-Kasparov~\cite{Higson-Kasparov-E-KK-properly-on-Hilbert} proved the Baum-Connes Conjecture for all a-T-menable groups, a class of groups that contains all amenable groups. 
  On the other hand, hyperbolic groups satisfy both conjectures.
  See Mineyev-Yu~\cite{Mineyev-Yu-BC-hyperbolic} and Lafforgue~\cite{Lafforgue-BC-hyperbolic} for the Baum-Connes Conjecture and, as mentioned above, \cite{Bartels-Lueck-Borel, Bartels-Lueck-Reich-FJ-hyperbolic} for the Farrell-Jones Conjecture. 
  For a more comprehensive summary of the current status of the Farrell-Jones Conjecture the reader is directed to~\cite{Lueck-MFO-slides-FJC, Reich-Varisco-FJ-Survey}.

  \subsection*{Acknowledgement}
  It is a pleasure to thank my teachers, coauthors, and students for the many things they taught me.
  The work described here has been supported by the SFB 878 in M\"unster.

  \section{The Formulation of the Farrell-Jones Conjecture} \label{sec:formulation-FJC}


  
  \subsection*{Classifying spaces for families}  
  A family $\calf$ of subgroups of a group $G$ is a non-empty collection of subgroups that is closed under conjugation and subgroups.
  Examples are the family $\Fin$ of finite subgroups and the family $\VCyc$ of virtually cyclic subgroups (i.e., of subgroups containing a cyclic subgroup as a subgroup of finite index).
  For any family of subgroups of $G$ there exists a $G$-$CW$-complex $E_\calf G$ with the following property: if $E$ is any other $G$-$CW$-complex such that all isotropy groups of $E$ belong to $\calf$, then there is a up to $G$-homotopy unique $G$-map $E \to E_\calf G$.
  This space is not unique, but it is unique up to $G$-homotopy equivalence.
  Informally one may think about $E_\calf G$ as a space that encodes the group $G$ relative to all subgroups from $\calf$.
  Often there are interesting geometric models for this space, in particular for $\calf = \Fin$.
  More information about this space can be found for example in~\cite{Lueck-SurveyClassifyingSpacesFamilies}.
  An easy way to construct $E_\calf G$ is as the infinite join $\ast_{i=0}^\infty (\coprod_{F \in \calf} G/F)$.
  If $\calf$ is closed under supergroups of finite index (i.e., if $F \in \calf$ is a subgroup of finite index in $F'$, then also $F' \in \calf$), then the full simplicial complex on $\coprod_{F \in \calf} G/F$ is also a model for $E_\calf G$; we will denote this model later by $\Delta_\calf(G)$.
    

  \subsection*{The formulation of the conjecture}
  The original formulation of the Farrell-Jones Conjecture~\cite{Farrell-Jones-IsomorphismConjectures} used homology with coefficients in stratified and twisted $\Omega$-spectra.
  Here we use the equivalent~\cite{Hambleton-Pedersen-Identifying} formulation developed by Davis-L\"uck~\cite{Davis-Lueck-assembly-maps}. 
  Given a ring $R$, Davis-L\"uck construct a homology theory $X \mapsto H_*^G(X;\bfK_{R})$ for $G$-spaces
  with the property that $H^G_*(G/H;\bfK_R) \cong K_*(R[H])$.
  
  Let $\calf$ be a family of subgroups of the group $G$.
  Consider the projection map $E_\calf G \to G/G$ to the one-point $G$-space $G/G$.
  It induces the $\calf$-assembly map 
  \begin{equation*}
  	\alpha^G_\calf \colon H^G_*(E_\calf;\bfK_R) \to H^G_*(G/G;\bfK_R) \cong K_* (R[G]).
  \end{equation*}
  
  \begin{conjecture}[Farrell-Jones Conjecture] \label{conj:FJC}
  	 For any group $G$ and any ring $R$ the assembly map $\alpha^G_{\VCyc}$ is an isomorphism.
  \end{conjecture}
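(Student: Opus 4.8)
The displayed statement is a conjecture and no proof is known; what follows describes the shape that a proof along currently understood lines would have to take, and isolates the obstruction that prevents it from being carried out for an arbitrary group. The plan is to decompose the problem using the formal behaviour of the assembly map $\alpha^G_\calf$ as a natural transformation of $G$-homology theories in the Davis--L\"uck framework. First I would invoke the inheritance properties of the conjecture --- stability under passage to subgroups and to overgroups of finite index, under directed colimits, and under extensions, together with the transitivity principle for a change of families --- to reduce the general case to the conjecture relative to the family $\Fin$ for a suitable class of building-block groups; the difference between the $\VCyc$-assembly and the $\Fin$-assembly is then controlled by the Bass--Heller--Swan decomposition and the analysis of the associated Nil-terms. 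The effect of these reductions is that one is left to prove, for each group $G$ in the building-block class, that a certain obstruction category is weakly trivial, equivalently that $\alpha^G_{\Fin}$ is an isomorphism once the Nil-contributions are accounted for.

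Second, to establish this for a given $G$ I would try to verify one of the geometric input conditions that the body of this paper is devoted to: the existence of sufficiently many finitely $\calf$-amenable actions of $G$ on finite-dimensional contractible compact ANRs (Euclidean retracts), together with control on the large-scale geometry of $G$ through a compactification or an associated flow space. Granting such an action, the controlled-topology and transfer machinery of Bartels--L\"uck--Reich converts the dynamical input into the required vanishing statement, and hence into bijectivity of the assembly map; this is exactly the route by which the conjecture has been confirmed for hyperbolic groups, $\CAT(0)$-groups, lattices in virtually connected Lie groups, $\GL_n(\IZ)$, and mapping class groups, in the non-cocompact cases after an additional induction on a complexity parameter.

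The hard part --- indeed the reason the displayed statement remains a conjecture rather than a theorem --- is the very first clause of the second step: there is no known mechanism that produces the required Euclidean-retract actions, a usable flow space, or any equivalent geometric input, for an arbitrary group, and the formal inheritance properties, powerful as they are, do not generate all groups from the geometrically accessible base cases. Classes such as general elementary amenable groups (beyond the solvable case of Wegner), $\mathrm{Out}(F_n)$, and many others are not reached by any present argument. Completing the proposal would therefore require a genuinely new idea at precisely this point: a construction of the needed actions valid in full generality, or a substitute for them that does not rely on curvature, a symmetric space, or a Teichm\"uller space.
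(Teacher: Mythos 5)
You are right that no proof exists: the displayed statement is an open conjecture, and the paper itself offers no proof, only a survey of the axiomatic conditions (finitely $\calf$-amenable actions on Euclidean retracts, flow spaces, the transitivity principle and inheritance properties) that establish it for particular classes of groups. Your outline of the available machinery and of the obstruction to a general argument matches the paper's own account, so there is nothing to correct.
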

  
  This version of the conjecture has been stated in~\cite{Bartels-Farrell-Jones-Reich-OnIso}.
  The original formulation of Farrell and Jones~\cite{Farrell-Jones-IsomorphismConjectures} considered only the integral group ring $\IZ[G]$.  
  Moreover, Farrell and Jones wrote that they regard this and related conjectures \emph{only as estimates which best fit the know data at this time}.
  However, the conjecture is still open and does still fit with all known data today. 
  
  \subsection*{Transitivity principle}
  Informally one can view the statement that the assembly map $\alpha^G_\calf$ is an isomorphism for a group $G$ and a ring $R$ as the statement that $K_*(R[G])$ can be assembled from $K_*(R[F])$ for all $F \in \calf$ (and group homology).
  If $\calv$ is  a family of subgroups of $G$ that contains all subgroups from $\calf$, then one can apply this slogan in two steps, for $G$ relative to $\calv$ and for each $V \in \calv$ relative to the $F \in \calf$ with $F \subseteq V$.
  The implementation of this is the following transitivity principle.
      
  \begin{theorem}[\cite{Farrell-Jones-IsomorphismConjectures, Lueck-Reich-Survey-BC-FJ}]
     \label{thm:transitivity-principle}
  	For $V \in \calv$ set $\calf_V := \{ F \mid F \in \calf, F \subseteq V \}$.
  	Assume that for all $V \in \calv$ the assembly map $\alpha^V_{\calf_V}$ is an isomorphism.
  	Then $\alpha^G_\calf$ is an isomorphism iff $\alpha^G_\calv$ is an isomorphism.
  \end{theorem}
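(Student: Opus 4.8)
The plan is to factor the $\calf$-assembly map through the $\calv$-assembly map and then to show that the comparison map between their sources is an isomorphism by an induction over equivariant cells, the hypothesis on the $\alpha^V_{\calf_V}$ entering only at the level of a single cell.

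First, since $\calf \subseteq \calv$, every $G$-$CW$-complex with isotropy in $\calf$ also has isotropy in $\calv$; hence there is a $G$-map $f \colon E_\calf G \to E_\calv G$, unique up to $G$-homotopy, and the composite $E_\calf G \xrightarrow{f} E_\calv G \to G/G$ is $G$-homotopic to the projection $E_\calf G \to G/G$. Applying the homology theory $H^G_*(-;\bfK_R)$ yields $\alpha^G_\calf = \alpha^G_\calv \circ f_*$. Thus it suffices to prove that
\begin{equation*}
  f_* \colon H^G_*(E_\calf G;\bfK_R) \longrightarrow H^G_*(E_\calv G;\bfK_R)
\end{equation*}
is an isomorphism; granted this, $\alpha^G_\calf$ is an isomorphism if and only if $\alpha^G_\calv$ is.

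To study $f_*$ I would replace $E_\calf G$ by $E_\calf G \times E_\calv G$. The two projections, together with the canonical maps $E_\calf G \to E_\calf G$ and $E_\calf G \to E_\calv G$, show that $E_\calf G \times E_\calv G$ is again a model for $E_\calf G$ (its isotropy groups lie in $\calf$), and under this identification $f$ becomes the projection $\pr \colon E_\calf G \times E_\calv G \to E_\calv G$. So the problem reduces to the following assertion: for every $G$-$CW$-complex $Z$ all of whose isotropy groups lie in $\calv$, the projection induces an isomorphism $\pr_* \colon H^G_*(E_\calf G \times Z;\bfK_R) \to H^G_*(Z;\bfK_R)$. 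This I would prove by induction over the skeleta of $Z$: the homology theory $H^G_*(-;\bfK_R)$ turns $G$-pushouts along inclusions of $G$-$CW$-complexes into Mayer--Vietoris sequences and commutes with the directed colimit over skeleta, and crossing with $E_\calf G$ is compatible with both; by the five-lemma the inductive step reduces to a single equivariant cell, i.e.\ to $Z = G/H$ with $H \in \calv$ (the disk factor is $G$-contractible and can be discarded). For such $Z$ the induction structure of the equivariant homology theory identifies $\pr_*$ with $\alpha^H_{\calf_H} \colon H^H_*(E_{\calf_H}H;\bfK_R) \to H^H_*(H/H;\bfK_R)$, using that $\res_H E_\calf G$ is a model for $E_{\calf_H}H$, where $\calf_H = \{F \in \calf \mid F \subseteq H\}$ as in the statement. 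Since $H \in \calv$, this map is an isomorphism by hypothesis, which completes the induction and hence the proof.

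The main obstacle is the careful set-up of the inductive step: one must arrange the Mayer--Vietoris sequences $G$-equivariantly and naturally in $Z$, verify that the functor $E_\calf G \times (-)$ preserves the relevant pushouts and colimits, and make precise the identification $\res_H E_\calf G \simeq E_{\calf_H}H$ together with the resulting matching of $\pr_*$ with the assembly map $\alpha^H_{\calf_H}$. The extension from finite to infinite-dimensional $Z$ relies on $H^G_*(-;\bfK_R)$ commuting with directed colimits, which is built into the Davis--L\"uck construction.
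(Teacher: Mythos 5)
Your proposal is correct and coincides with the standard argument: the paper itself does not prove Theorem~\ref{thm:transitivity-principle} but cites \cite{Farrell-Jones-IsomorphismConjectures, Lueck-Reich-Survey-BC-FJ}, where the proof is exactly your factorization $\alpha^G_\calf = \alpha^G_\calv \circ f_*$ combined with showing that $H^G_*(E_\calf G \times Z;\bfK_R) \to H^G_*(Z;\bfK_R)$ is an isomorphism for every $G$-$CW$-complex $Z$ with isotropy in $\calv$, by induction over skeleta and cells, using the induction structure and the identification $\res_V E_\calf G \simeq E_{\calf_V}V$ to recognize the map over an orbit $G/V$ as $\alpha^V_{\calf_V}$. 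The only technical point worth adding is that for skeleta with infinitely many equivariant cells one also invokes the disjoint union axiom of the Davis--L\"uck theory, which is available in that framework.
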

       
       
  \subsection*{Twisted coefficients} 
  Often it is beneficial to study more flexible generalizations of Conjecture~\ref{conj:FJC}.
  Such a generalization is the Fibred Isomorphism Conjecture of Farrell-Jones~\cite{Farrell-Jones-IsomorphismConjectures}.
  An alternative is the Farrell-Jones Conjecture with coefficients in additive categories~\cite{Bartels-Reich-Coeff}, here one allows additive categories with an action of a group $G$ instead of just a ring as coefficients.
  This version of the conjecture applies in particular to twisted group rings.
  These generalizations of the Conjecture have better inheritance properties.
  Two of these inheritance property are stability under directed colimits of groups, and stability under taking subgroups.
  For a summary of the inheritance properties see~\cite[Thm.~27(2)]{Reich-Varisco-FJ-Survey}.
  Often proofs of cases of the Farrell-Jones Conjecture use these inheritance properties in inductions or to reduce to special cases. 
  We will mean by the statement that $G$ satisfies the Farrell-Jones Conjecture relative to $\calf$, that the assembly map $\alpha^G_\calf$ is bijective for all additive categories $\cala$ with $G$-action.
  However, this as a technical point that can be safely ignored for the purpose of this note.        
       
  \subsection*{Other theories}
  The Farrell-Jones Conjecture for $K$-theory discussed so far has an analog in $L$-theory, as it appears in surgery theory.
  For some of the applications mentioned before this is crucial.
  For example, the Borel Conjecture for a closed aspherical manifold $M$ of dimensions $\geq 5$ holds if the fundamental group of $M$ satisfies both the $K$-and $L$-theoretic Farrell-Jones Conjecture.
  However, proofs of the Farrell-Jones Conjecture in $K$- and $L$-theory are by now very parallel.
  Recently, the techniques for the Farrell-Jones Conjecture in $K$- and $L$-theory have been extended to also cover Waldhausen's $A$-theory~\cite{Enkelmann-Lueck-Pieper-Ullmann-Winges-OnA-Theory-FJ, Kasprowski-Ullmann-Wegner-Winges-A-FJ-solv, Ullmann-Winges-A-theory-Farrell-Hsiang}.
  In particular, the conditions we will discuss in Section~\ref{sec:actions} are now known to imply the  Farrell-Jones Conjecture in all three theories.

  \section{Actions on compact spaces} \label{sec:actions}
  
  \subsection*{Amenable actions and exact groups}

  \begin{definition}[Almost invariant maps]
    Let $X$, $E$ be $G$-spaces where $E$ is equipped with a $G$-invariant metric $d$.
  	We will say that a sequence of maps $f_n \colon X \to E$ is almost $G$-equivariant if for any $g \in G$
  	\begin{equation*}
  		\sup_{x \in X} d(f_n(gx), gf_n(x)) \to 0 \quad \text{as} \quad n \to \infty.
  	\end{equation*} 
  \end{definition}

  For a discrete group $G$ we equip the space $\Prob(G)$ of probability measures on $G$ with the metric it inherits as subspace of $l^1(G)$.
  This metric generates the topology of point-wise convergence on $\Prob(G)$. 
  We recall the following definition. 
  
  \begin{definition}
  	An action of a group $G$ on a compact space $X$ is said to be amenable if there exists a sequence of almost equivariant maps $X \to \Prob(G)$.
  \end{definition}
    
  A group is amenable iff its action on the one point space is amenable.
  Groups that admit an amenable action on a compact Hausdorff space are said to be \emph{exact} or \emph{boundary amenable}.
  The class of exact groups contains all amenable groups, hyperbolic groups~\cite{Adams-BoundaryAmenablityHyperbolicGr}, and all linear groups~\cite{Guentner-Higson-Weinberger-NovikovLinearGroup}.
  Other prominent groups that are known to be exact are mapping class groups~\cite{Hamenstaedt-geometry-MCG-I, Kida-MCG-measure-equivalence} and the group of outer automorphisms of free groups~\cite{Bestvina-Guirardel-Horbez-d-amenabilty-OutF_n}.
  The Baum-Connes assembly map is split injective for all exact groups~\cite{Higson-bivariant-Novikov-GAFA, Yu-CBCforUniformEmbedding}.
  This implies the Novikov conjecture for exact groups. 
  This is an analytic result for the Novikov conjecture, in the sense that it has no known proof that avoids the Baum-Connes Conjecture.
  There is no corresponding injectivity result for assembly maps in algebraic $K$-theory.   
  For a survey about amenable actions and exact groups see~\cite{Ozawa-ICM}.

  \subsection*{Finite asymptotic dimension}
  Results for assembly maps in algebraic $K$-theory and $L$-theory often depend on a finite dimensional setting; the space of probability measures has to be replaced with a finite dimensional space.
  We write $\Delta(G)$ for the full simplicial complex with vertex set $G$ and $\Delta^{(N)}(G)$ for its $N$-skeleton.
  The space $\Delta(G)$ can be viewed as the space of probability measures on $G$ with finite support. 
  We equip $\Delta(G)$ with the $l^1$-metric; this is the metric it inherits from $\Prob(G)$.
  
  \begin{definition}[$N$-amenable action]
  	We will say that an action of a group $G$ on a compact space $X$ is $N$-amenable if there exists a sequence of almost equivariant maps $X \to \Delta^{(N)}(G)$.
  \end{definition}   
  
  The natural action of a countable group $G$ on its Stone-\v{C}ech compactification $\beta G$ is $N$-amenable iff the asymptotic dimension of $G$ is at most $N$~\cite[Thm.~6.5]{Guentner-Willett-Yu-dyn-asy-dim}. 
  This condition (for any $N$) also implies exactness and therefore the Novikov conjecture~\cite{Higson-Roe-AmenableActionsNovikov}.
  For groups $G$ of finite asymptotic dimension for which in addition the classifying space $BG$ can be realized as a finite $CW$-complex, there is an alternative argument for the Novikov Conjecture~\cite{Yu-Novikov-finite-asym-dim} that has been translated to integral injectivity results for assembly maps in algebraic $K$-theory and $L$-theory~\cite{Bartels-Squeezing, Carlsson-Goldfarb-IntegralNovikovAsyDim}. 
  These injectivity results have seen far reaching generalizations to groups of finite decomposition complexity~\cite{Guentner-Tessera-Yu-geom-complx-top-rigid-FDC, Kasprowski-On-FDC-K-theory, Ramras-Tessera-Yu-FDC-algKtheory}.

  \subsection*{$N$-$\calf$-amenable actions.}
  Constructions of transfer maps in algebraic $K$-theory and $L$-theory often depend on actions on spaces that are much nicer than $\beta G$.
  A good class of spaces to use for the Farrell-Jones Conjecture are Euclidean retracts, i.e., compact spaces that can be embedded as a retract in some $\IR^n$.
  Brouwer's fixed point theorem implies that for an action of a group on an Euclidean retract any cyclic subgroup will have a fixed point.
  It is not difficult to check that this obstructs the existence of almost equivariant maps to $\Delta^{(N)}$ (assuming $G$ contains an element of infinite order).
  Let $\calf$ be a family of subgroups of $G$ that is closed under taking supergroups of finite index.
  Let $S := \coprod_{F \in \calf} G/F$ be the set of all left cosets to members of $\calf$.
  Let $\Delta_\calf(G)$ be the full simplicial complex on $S$ and $\Delta_\calf^{(N)}(G)$ be its $N$-skeleton.
  We equip $\Delta_\calf(G)$ with the $l^1$-metric.
  
  \begin{definition}[$N$-$\calf$-amenable action]
  	We will say that an action of $G$ on a compact space $X$ is $N$-$\calf$-amenable if there exists a sequence of almost equivariant maps $X \to \Delta_\calf^{(N)}(G)$.
  	If an action is $N$-$\calf$-amenable for some $N \in \IN$, then we say that it is finitely $\calf$-amenable.
  \end{definition}  
  
  \begin{remark} \label{rem:N-F-amenable-G-CW}
    Let $X$ be a $G$-$CW$-complex with isotropy groups in $\calf$ and of dimension $\leq N$.
    As $\Delta_\calf(G)$ is a model for $E_\calf G$ we obtain a cellular $G$-map $f \colon X \to \Delta^{(N)}_\calf(G)$; this map is also continuous for the $l^1$-metric.
    In particular, the constant sequence $f_n \equiv f$ is almost equivariant.
    Therefore one can view $N$-$\calf$-amenability for $G$-spaces as a relaxation of the property of being a $G$-$CW$-complex with isotropy in $\calf$ and of dimension $\leq N$.
        
    This relaxation is necessary to obtain compact examples and reasonably small $\calf$: If a $G$-$CW$-complex is compact,  then it has only finitely many cells.
    In particular, for each cell the isotropy group has finite index in $G$, so $\calf$ would have to contain subgroups of finite index in $G$.   
  \end{remark}

  
  
  \begin{theorem}  [\cite{Bartels-Lueck-Borel, Bartels-Lueck-Reich-FJ-hyperbolic}] \label{thm:FJ-N-F-actions}
    Suppose that $G$ admits a finitely $\calf$-amenable action on a Euclidean retract.
  	Then $G$ satisfies the Farrell-Jones Conjecture relative to $\calf$.
  \end{theorem}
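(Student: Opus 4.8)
\emph{Reduction to controlled algebra.} The plan is to run the transfer-and-squeeze argument of \cite{Bartels-Lueck-Borel, Bartels-Lueck-Reich-FJ-hyperbolic}, with the almost equivariant maps furnished by finite $\calf$-amenability playing the role of the equivariant maps to a model of $E_\calf G$ that appear in the notion of transfer reducibility. First I would reformulate the assertion that $\alpha^G_\calf$ is an isomorphism in the language of controlled algebra: by the Davis--L\"uck formalism \cite{Davis-Lueck-assembly-maps} and the obstruction category machinery of \cite{Bartels-Farrell-Jones-Reich-OnIso}, carried along with additive $G$-category coefficients \cite{Bartels-Reich-Coeff}, the fibre of $\alpha^G_\calf$ is identified with the $K$-theory of a category of geometric $R[G]$-modules that are metrically controlled over a cone $G \times [1,\infty)$ and $\calf$-controlled in the $G$-direction, taken as germs at infinity. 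It then suffices to show that this obstruction category has trivial $K$-theory; passage to spectra and to negative $K$-groups is handled by inserting extra $\IR^k$-directions into the control space, and the $L$- and $A$-theory variants run in parallel.

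\emph{Transfer from the Euclidean retract.} Next I would use that a Euclidean retract $X$ is a compact, finite-dimensional absolute neighbourhood retract, hence for every $\e > 0$ is $\e$-dominated by a finite simplicial complex of dimension bounded in terms of $\dim X$, the embedding $X \subseteq \IR^n$ and the ambient retraction supplying the geometric glue. Feeding this domination data, through the diagonal and the given $G$-action on $X$, into the controlled algebra produces a transfer functor on the obstruction category: a geometric module controlled over $Y$ is sent to a chain complex of geometric modules controlled over $Y \times X$, of uniformly bounded length and fibre dimension, resolving the original module up to the chosen error $\e$. Since $X$ is compact, its direction contributes only boundedly to the metric, so the transfer distorts control only boundedly; and as $\e$ is at our disposal, as much control as needed is retained. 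As in the cited work, one also arranges that forgetting the $X$-coordinate after transfer is controlled-homotopic to the identity, the Euler characteristic of the fibre entering as the bookkeeping constant.

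\emph{Collapsing the fibre, and squeezing.} Finally I would invoke the hypothesis: the almost equivariant maps $f_n \colon X \to \Delta_\calf^{(N)}(G)$ collapse the $X$-coordinate of the transferred data into the $N$-skeleton of the $\calf$-complex, which by Remark~\ref{rem:N-F-amenable-G-CW} is a model for $E_\calf G$. After this collapse the obstruction category consists of germs at infinity of geometric modules over a fixed-dimensional $\calf$-controlled complex, and such a category is flasque, so an Eilenberg swindle trivializes its $K$-theory. The $f_n$ are only almost equivariant, but their equivariance defect tends to zero; composing with $f_n$ as $n \to \infty$ and passing to the colimit, while invoking the squeezing theorem of \cite{Bartels-Farrell-Jones-Reich-OnIso, Bartels-Squeezing} to the effect that sufficiently well controlled classes over the cone already vanish, absorbs this defect. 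Combined with the splitting of the transfer from the previous paragraph, this forces the obstruction $K$-theory to vanish, i.e., $\alpha^G_\calf$ is an isomorphism for all additive $G$-category coefficients.

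\emph{The main difficulty.} I expect the technical heart to be the construction of, and the bookkeeping around, the transfer: it must be simultaneously $G$-equivariant, of uniformly bounded fibre complexity, a controlled resolution, split after forgetting the fibre, and metrically tame, so that the long thin cells of the cone at infinity stay controlled both after transfer and after the collapse by $f_n$; and the quantifiers must be arranged so that the control loss in the transfer and the vanishing equivariance defect of the $f_n$ are both swallowed by the tolerance in the squeezing theorem. It is precisely this step that genuinely requires $X$ to be a finite-dimensional ANR rather than an arbitrary compact metrizable space, and where the real work of \cite{Bartels-Lueck-Borel, Bartels-Lueck-Reich-FJ-hyperbolic} lies.
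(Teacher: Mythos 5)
Your outline is essentially the transfer-reducibility proof of the cited sources, and as a plan it is faithful to them; but it is not the argument this paper gives. Being a survey, the text here deliberately defers the controlled-algebra proof to \cite{Bartels-Lueck-Borel, Bartels-Lueck-Reich-FJ-hyperbolic} (with \cite{Bartels-On-proofs} as an introduction) and only sketches a toy case: assuming the Euclidean retract is itself a $G$-$CW$-complex (which by Remark~\ref{rem:N-F-amenable-G-CW} forces $\calf$ to contain finite-index subgroups), the cellular chain complex of the contractible complex is a finite resolution $C_*$ of $\IZ$ by sums of permutation modules $\IZ[G/F]$ with $F \in \calf$; tensoring with a finitely generated projective $R[G]$-module $P$ and identifying $\IZ[G/F] \ox_\IZ P$ with $\ind_F^G \res_F^G P$ shows that $[P] = \sum_k (-1)^k [C_k \ox_\IZ P]$ lies in the image of $\alpha^G_\calf$, i.e.\ it proves surjectivity on $K_0$ only. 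Your route is the one that actually establishes the theorem: it gives injectivity, higher and negative $K$-groups, and, crucially, covers the case of real interest, where $X$ admits no $G$-$CW$-structure with isotropy in a small family and one only has almost equivariant maps to $\Delta^{(N)}_\calf(G)$; the price is the full obstruction-category machinery. What the paper's special case buys is an elementary illustration of what the transfer accomplishes (the Euler characteristic of the contractible fibre reappearing as your ``bookkeeping constant''), at the cost of saying nothing about the conjecture in its intended generality.

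Two small corrections to your last step. After collapsing with $f_n$ the transferred cycles do not land in a flasque category to be killed by an Eilenberg swindle; they become $\e$-controlled over the finite-dimensional complex $\Delta^{(N)}_\calf(G)$ with its $l^1$-metric, and the conclusion comes from the stability/squeezing statement that classes with control below a threshold depending only on $N$ die in the $\calf$-obstruction category (swindles enter elsewhere, in showing the ambient categories with the $[1,\infty)$-direction are flasque). And the quantifiers must be ordered so that $N$ is fixed first by the hypothesis, then the tolerance $\e = \e(N)$, then $n$; this is exactly why the uniform bound on the skeleton dimension in finite $\calf$-amenability is indispensable, as your final paragraph correctly anticipates.
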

  
  \begin{remark}
  	The proof of Theorem~\ref{thm:FJ-N-F-actions} depends on methods from controlled topology/algebra that have a long history. 
  	An introduction to controlled algebra is given in~\cite{Pedersen-contr-alg-K-survey};
  	an introduction to the proof of Theorem~\ref{thm:FJ-N-F-actions} can be found in~\cite{Bartels-On-proofs}.
  	Here we only sketch a very special case, where these methods are not needed. 
  	
  	Assume that the Euclidean retract is a $G$-$CW$-complex $X$.
  	As pointed out in Remark~\ref{rem:N-F-amenable-G-CW} this forces $\calf$ to contain subgroups of finite index in $G$.
  	As $X$ is contractible, the cellular chain complex of $X$ provides a finite resolution $C_*$ over $\IZ[G]$ of the trivial $G$-module $\IZ$.
  	Note that in each degree $C_k = \bigoplus \IZ[G/F_{i}]$ is a finite sum of permutation modules with $F_{i} \in \calf$ and $F_i$ of finite index in $G$.  
  	For a finitely generated projective $R[G]$-module $P$ we obtain a finite resolution $C_* \ox_\IZ P$ of $P$.
    Each module in the resolution is a finite sum of modules of the form $\IZ[G/F] \ox_\IZ P$ with $F \in \calf$ and of finite index in $G$.
    Here $\IZ[G/F] \ox_\IZ P$ is equipped with the diagonal $G$-action and can be identified with the $R[G]$-module obtained by first restricting $P$ to an $R[F]$-module and then inducing back up from $R[F]$ to $R[G]$.
  	In particular, $\big[\IZ[G/F] \ox_\IZ P\big] \in K_0(R[G])$ is in the image of the assembly map relative to the family $\calf$.
  	It follows that $[P] = \sum_{k} (-1)^k [C_k \ox_\IZ P]$ is also in the image.
  	Therefore the assembly map $H_0(E_\calf G;\bfK_R) \to K_0(R[G])$ is surjective.  
  	(This argument did not use that $\calf$ is closed under supergroups of finite index.)
  \end{remark}
  
  \begin{example}
  	Let $G$ be a hyperbolic group. 
  	Its Rips complex can be compactified to a Euclidean retract~\cite{Bestvina-Mess-boundary}.
  	The natural action of $G$ on this compactification is finitely $\VCyc$-amenable~\cite{Bartels-Lueck-Reich-Cover}.
  	
  	To obtain further examples of finitely $\calf$-amenable actions on Euclidean retracts, it is helpful to replace $\VCyc$ with a larger family of subgroups $\calf$.
  	Groups that act acylindrically hyperbolic on a tree admit finitely $\calf$-amenable actions on Euclidean retracts where $\calf$ is the family of subgroups that is generated by the virtually cyclic subgroups and the isotropy groups for the original action on the tree~\cite{Knopf-AcylTree-FJC}.
  	Relative hyperbolic groups and mapping class groups are discussed in Section~\ref{sec:cover-at-infty}. 
  \end{example}
  
  	
  	

  \begin{remark}
    A natural question is which groups admit finitely $\VCyc$-amenable actions on Euclidean retracts.
    A necessary condition for an action to be finitely $\VCyc$-amenable is that all isotropy groups of the action are virtually cyclic.
    Therefore, a related question is which groups admit actions on Euclidean retracts such that all isotropy groups are virtually cyclic.
    The only groups admitting such actions that I am aware of are hyperbolic groups.
    In fact, I do not even know whether or not the group $\IZ^2$ admits an action on a Euclidean retract (or on a disk) such that all isotropy groups are virtually cyclic.
    There are actions of $\IZ^2$ on disks without a global fixed point.
    This is a consequence of Oliver's analysis of actions of finite groups on disks~\cite{Oliver-fixed-points-finite-acyclic}.
    On the other hand, there are finitely generated groups for which all actions on Euclidean retracts have a global fixed point~\cite{Arzhantseva-Bridson-Januszkiewicz-Leary-Minasyan-Switkowski}. 
  \end{remark}
  
  	
  	
   
  \subsection*{Homotopy actions} 
    
  There is a generalization of Theorem~\ref{thm:FJ-N-F-actions} using homotopy actions.
  In order to be applicable to higher $K$-theory these actions need to be homotopy coherent.
  The passage from strict actions to homotopy actions is already visible in the work of Farrell-Jones where it corresponds to the passage from the asymptotic transfer used for negatively curved manifolds~\cite{Farrell-Jones-Ktheory-dynamics-I} to the focal transfer used for non-positively curved manifolds~\cite{Farrell-Jones-TopRigigdityCompactNonPos}. 
  
  \begin{definition}[\cite{Vogt-HomotopyLimitsColimits,Wegner-CAT0}]
  	A \emph{homotopy coherent action} of a group $G$ on a space $X$ is a continuous map
  	\begin{equation*}
  		\Gamma \colon \coprod_{j=0}^\infty ((G \x [0,1])^j \x G \x X) \to X 
  	\end{equation*}
  	such that
  	\begin{equation*}
  		\Gamma(g_k,t_k,\dots,t_1,g_0,x) = 
  		   \begin{cases} \Gamma(g_k,\dots,g_j,\Gamma(g_{j-1},\dots,x)) & t_j = 0 \\
  		                 \Gamma(g_k,\dots,g_j g_{j-1},\dots,x) & t_j = 1 \\
  		                 \Gamma(g_k,\dots,t_2,g_1,x) & g_0 = e, 0 < k \\
  		                 \Gamma(g_k,\dots,t_{j+1}t_j,\dots,g_0,x) & g_j = e, 1 \leq j < k \\
  		                 \Gamma(g_{k-1},\dots,t_1,g_0,x) & g_k = e, 0 < k \\
  		                 x & g_0 = e, k=0  	
  		   \end{cases}
  	\end{equation*}
  \end{definition}
  
  Here $\Gamma(g,-) \colon X \to X$ should be thought of the action of $g$ on $X$, the map $\Gamma(g,-,h,-) \colon [0,1] \x X \to X$ is a homotopy from $\Gamma(g,-) \circ \Gamma(h,-)$ to $\Gamma(gh,-)$ and the remaining data in $\Gamma$ encodes higher coherences. 
  
  In order to obtain sequences of almost equivariant maps for homotopy actions it is useful to also allow the homotopy action to vary.  
  
  \begin{definition}[$N$-$\calf$-amenability for homotopy coherent actions]
  	A sequence of homotopy coherent actions $(\Gamma_n,X_n)$ of a group $G$ is said to be $N$-$\calf$-amenable if there exists a sequence of continuous maps $f_n \colon X_n \to \Delta^{(N)}_\calf(G)$ such that for all $k$ and all $g_k,\dots,g_0 \in G$ 
  	\begin{equation*}
  		\sup_{x \in X, t_k,\dots,t_1 \in [0,1]}  d(f_n(\Gamma(g_k,t_k,\dots,t_1,g_0,x),g_k\cdots g_0 f_n(x))  \to 0 \quad \text{as} \quad n \to \infty.
  	\end{equation*}     	
  \end{definition}
  
  \begin{theorem}[\cite{Bartels-Lueck-Borel, Wegner-CAT0}] \label{thm:FJ-homotopy-coherent}
    Suppose that $G$ admits a sequence of homotopy coherent actions on Euclidean retracts of uniformly bounded dimension that is finitely $\calf$-amenable.
  	Then $G$ satisfies the Farrell-Jones Conjecture relative to $\calf$.	
  \end{theorem}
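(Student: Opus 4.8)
The plan is to recognise the hypothesis as a repackaging of the ``transfer reducibility'' condition that underlies Theorem~\ref{thm:FJ-N-F-actions}, and then to carry out the same controlled--algebraic argument, the one genuinely new ingredient being that the transfer must now be assembled from homotopy coherent rather than strict action data. Since the asserted conclusion is again ``relative to $\calf$'' in the sense that allows arbitrary additive categories with $G$-action as coefficients, the whole argument has to be natural in the coefficients, which it will be.

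First I would recall the general reduction. By the Davis--L\"uck description of assembly, the homotopy fiber of $\alpha^G_\calf$ is, up to a shift, the $K$-theory of an ``obstruction category'' of $G$-equivariant continuously controlled modules over a suitable $G$-space built from $E_\calf G$ and $[1,\infty)$; so it suffices to show that this spectrum is weakly contractible. This is proved by producing, for every prescribed degree $\beta$ of control, an endofunctor (a ``transfer'') of the relevant controlled category together with a natural transformation to the identity, such that the transferred objects are supported on pieces with isotropy in $\calf$ and with control $\le \beta$ over $E_\calf G$; letting $\beta$ run through a cofinal sequence then squeezes the identity of the obstruction spectrum to the zero map. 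Two inputs feed this construction: a geometric cover condition, and a finite-dimensional contractible ``transfer space'' on which $G$ acts, at least up to homotopy, so as to carry a chain-level transfer.

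I would extract the cover condition from the almost equivariant maps $f_n \colon X_n \to \Delta^{(N)}_\calf(G)$. Pulling back the covering of $\Delta^{(N)}_\calf(G)$ by open stars of vertices produces an open cover of $X_n$ of multiplicity $\le N+1$, indexed by the cosets $gF$ that label the vertices; the defining estimate for $N$-$\calf$-amenability of the sequence $(\Gamma_n,X_n)$ guarantees that, for $n$ large, this cover is as invariant under a given finite $S \subseteq G$ as one wishes, and the isotropy of its members lies in $\calf$ exactly as in Remark~\ref{rem:N-F-amenable-G-CW}. For the transfer space I would use the Euclidean retracts $X_n$ themselves: each is a compact retract of some $\IR^{k}$ with $k$ bounded independently of $n$, hence a finite-dimensional compact ANR, and a nerve/\v{C}ech approximation turns it, together with the homotopy coherent action, into a finite model of uniformly bounded dimension on which the transfer can be defined. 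The uniform dimension bound is what makes the resulting controlled estimates uniform in $n$, and hence usable in the limiting argument above.

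The heart of the matter, and the step I expect to be the main obstacle, is the construction of the transfer endofunctor on the controlled $K$-theory spectrum out of the homotopy coherent action alone. For a strict action one simply tensors controlled modules with the cellular chains of the transfer space and obtains an honest functor; with only the data $\Gamma_n$ one instead obtains a homotopy coherent diagram of controlled categories assembled from the maps $\Gamma_n(g,-)$, the homotopies $\Gamma_n(g,-,h,-)$, and all the higher coherences, which must then be rectified --- for instance by Vogt's theorem or by an explicit simplicial model in the spirit of~\cite{Vogt-HomotopyLimitsColimits,Wegner-CAT0} --- to an actual functor on a weakly equivalent spectrum, in such a way that the quantitative control estimates needed for the cover step survive rectification. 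Reconciling the ``up to coherent homotopy'' nature of the transfer with the ``$\beta$-control'' bookkeeping, uniformly in $n$, is the delicate point; once it is done, the cover pulled back from $\Delta^{(N)}_\calf(G)$ provides the control and isotropy bounds that force the obstruction spectrum to be contractible, so $\alpha^G_\calf$ is an isomorphism, and since every construction is functorial in the coefficients this holds for all additive categories with $G$-action.
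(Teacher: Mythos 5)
You should first be aware that the paper does not prove Theorem~\ref{thm:FJ-homotopy-coherent} at all: it is quoted from \cite{Bartels-Lueck-Borel, Wegner-CAT0}, and the surrounding text only sketches the strict-action case and the translation between almost equivariant maps and wide covers (Remark~\ref{rem:covers-not-maps}). Measured against the actual proofs in those references, your outline has the right architecture: reduce to showing a controlled obstruction category has trivial $K$-theory, get arbitrarily $S$-wide $\calf$-covers of $G \x X_n$ of dimension $\leq N$ by pulling back open stars of $\Delta^{(N)}_\calf(G)$ along the maps $f_n$, and use the compact, contractible, uniformly finite-dimensional $X_n$ as transfer spaces; the uniform dimension bound entering the control estimates is also correctly identified.

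The genuine gap is exactly at the point you flag as the heart of the matter, and your proposed remedy for it is not viable as stated. Rectifying the homotopy coherent diagram by Vogt's theorem \cite{Vogt-HomotopyLimitsColimits} into an honest functor on a weakly equivalent spectrum is a purely homotopy-theoretic operation: it gives no handle on the metric bookkeeping, and the whole point of the transfer is that it must be an \emph{explicitly controlled} construction, with supports and control degrees tracked through every map $\Gamma_n(g,-)$ and every coherence homotopy, uniformly in $n$. This is why \cite{Wegner-CAT0} (and \cite{Bartels-Lueck-Borel}, building on \cite{Bartels-Lueck-Reich-FJ-hyperbolic}) does not rectify: the transfer is built directly from the coherence data, by tensoring controlled modules with a finitely dominated chain complex associated to $X_n$ (note $X_n$ is only a Euclidean retract, not a $G$-$CW$-complex, so ``cellular chains'' must be replaced by a singular/simplicial finite domination), and the $(\alpha,\delta)$-type estimates are verified by hand at each stage; the passage from chain-level data to $K$-theory then goes through a category of controlled chain homotopy idempotents rather than through an endofunctor obtained by abstract rectification. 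As written, your proposal defers this entire construction to ``once it is done,'' so the one step that distinguishes Theorem~\ref{thm:FJ-homotopy-coherent} from Theorem~\ref{thm:FJ-N-F-actions} is missing, and the route you suggest for it would need to be replaced by the explicit controlled transfer of the cited papers.
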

  
  \begin{remark} \label{rem:covers-not-maps}
  	Groups satisfying the assumptions of Theorem~\ref{thm:FJ-homotopy-coherent} are said to be homotopy transfer reducible in~\cite{Enkelmann-Lueck-Pieper-Ullmann-Winges-OnA-Theory-FJ}.
  	The original formulations of Theorems~\ref{thm:FJ-N-F-actions} and~\ref{thm:FJ-homotopy-coherent} were not in terms of almost equivariant maps, but in terms of certain open covers of $G \x X$.
  	
  	We recall here the formulation used for actions. 
  	(The formulation for homotopy actions is more cumbersome.)
  	A subset $U$ of a $G$-space is said to be an $\calf$-subset if there is $F \in \calf$ such that $gU = U$ for all $g \in F$ and $U \cap gU = \emptyset$ for all $g \in G \setminus F$.
  	A collection $\calu$ of subsets is said to be $G$-invariant if $gU \in \calu$ for all $g \in G$, $U \in \calu$.
  	If no point is contained in more than $N+1$ members of $\calu$, then $\calu$ is said to be of order (or dimension) $\leq N$.
  	A $G$-invariant cover by open $\calf$-subsets is said to be an $\calf$-cover.

  	For a compact $G$-space $X$ we equip now $G \x X$ with the diagonal $G$-action.
  	For $S \subseteq G$ finite an $\calf$-cover of $G \x X$ is said to be $S$-wide (in the $G$-direction) if
  	\begin{equation*}
  		\forall \, (g,x) \in G \x X \; \; \exists \, U \in \calu \;\; \text{such that} \;\; gS \x \{ x \} \subseteq U.
  	\end{equation*}
  	Then the action of $G$ on $X$ is $N$-$\calf$-amenable iff for any $S \subset G$ finite there exists an $S$-wide $\calf$-cover $\calu$ for $G \x X$ of dimension at most $N$~\cite[Prop.~4.5]{Guentner-Willett-Yu-dyn-asy-dim}. 
  	A translation from covers to maps is also used in~\cite{Bartels-Lueck-Borel, Bartels-Lueck-Reich-FJ-hyperbolic, Wegner-CAT0}.  
  	From the point of view of covers (and because of the connection to the asymptotic dimension) it is natural to think of the $N$ in $N$-$\calf$-amenable as a kind of dimension for the action of $G$ on $X$, see~\cite{Guentner-Willett-Yu-dyn-asy-dim, Sawicki-eq-asym-dim}.  
  	  	 
  	A further difference between the formulations used above and in the references given is that the conditions on the topology of $X$ are formulated differently, but certainly Euclidean retracts satisfy the condition from~\cite{Bartels-Lueck-Borel}.
  \end{remark}

  \begin{example}
  	Theorem~\ref{thm:FJ-homotopy-coherent} applies to $\CAT(0)$-groups where $\calf=\VCyc$ is the family of virtually cyclic subgroups~\cite{Bartels-Lueck-CAT0-geod-flow, Wegner-CAT0}.
  	An application of Theorem~\ref{thm:FJ-homotopy-coherent} to $\GL_n(\IZ)$ will be discussed in Section~\ref{sec:cover-at-infty}.
  \end{example}
  	
  \begin{remark}[The Farrell-Hsiang method]
    There are interesting groups for which one can deduce the Farrell-Jones Conjecture using Theorems~\ref{thm:FJ-N-F-actions} (or~\ref{thm:FJ-homotopy-coherent}) and inheritance properties. 
    However, it is not clear that these methods can account for all that is currently known.
    A third method, going back to work of Farrell-Hsiang~\cite{Farrell-Hsiang-top-Eucl-space-form}, combines induction results for finite groups~\cite{Dress-Induction-structure-orth-reps, Swan-Induced-reps-proj-modules} with controlled topology/algebra.
    An axiomatization of this method is given in~\cite{Bartels-Lueck-Farrell-Hsiang}.
  	In important part of the proof of the Farrell-Jones Conjecture for solvable groups~\cite{Wegner-FJsolvable} is a combination of this method with Theorem~\ref{thm:FJ-homotopy-coherent}. 
  \end{remark}	
  	
  \begin{remark}[Trace methods]
  	The $K$-theory Novikov Conjecture concerns injectivity of assembly maps in algebraic $K$-theory, i.e., lower bounds for the algebraic $K$-theory of group rings.
  	For the integral group ring of groups, that are only required to satisfy a mild homological finiteness assumption, trace methods have been used by B\"okstedt-Hsiang-Madsen~\cite{Boekstedt-Hsiang-Madsen-cyclotomic} and L\"uck-Reich-Rognes-Varisco~\cite{Lueck-Reich-Rognes-Varisco-K-theory-cyclotomic-trace} to obtain rational injectivity results.
  	The latter result in particular yields interesting lower bounds for Whitehead groups.
  	For the group ring over the ring of Schatten class operators Yu~\cite{Yu-Novikov-Schatten-class} proved rational injectivity of the Farrell-Jones assembly map for all groups.
  	This is the only result I am aware of for the Farrell-Jones Conjecture that applies to all groups!
  \end{remark}

  \section{Flow spaces} \label{sec:flow-spaces}
  
  The construction of almost equivariant maps often uses the dynamic of a flow associated to the situation.
  
  \begin{definition}
    A \emph{flow space} for a group $G$ is a metric space $\FS$ equipped with a flow $\Phi$ and an isometric $G$-action where the flow and the $G$-action commute. 
    For $\alpha > 0$, $\delta > 0$, $c,c' \in \FS$ we write 
  	\begin{equation*}
  		d_\fol(c,c') < (\alpha,\delta)
  	\end{equation*}
  	to mean that there is $t \in [-\alpha,\alpha]$ such that $d(\Phi_t(c),c') < \delta$. 
  \end{definition} 
  
  \begin{example}
    Let $G$ be the fundamental group of a Riemannian manifold $M$.
    Then the sphere bundle $S \tilde M$ equipped with the geodesic flow is a flow space for the fundamental group of $M$. 
    For manifolds of negative or non-positive curvature this flow space is at the heart of the connection between $K$-theory and dynamics used to great effect by Farrell-Jones. 
        
    This example has generalizations to hyperbolic groups and $\CAT(0)$-groups.
    For hyperbolic groups Mineyev's symmetric join is a flow space~\cite{Mineyev-FlowsJoins}.
    Alternatively, it is possible to use a coarse flow space for hyperbolic groups, see Remark~\ref{rem:coarse-flow-space} below.
    For groups acting on a $\CAT(0)$-space a flow space has been constructed in~\cite{Bartels-Lueck-CAT0-geod-flow}. 
    It consists of all parametrized geodesics in the $\CAT(0)$-space (technically all generalized geodesics) and the flow acts by shifting the parametrization.  	
  \end{example}

  Almost equivariant maps often arise as compositions 
  \begin{equation*}
  	X \xrightarrow{\varphi} \FS \xrightarrow{\psi} \Delta^{(N)}_\calf(G),
  \end{equation*}
  where the first map is almost equivariant in an $(\alpha,\delta)$-sense, and the second map is $G$-equivariant and contracts $(\alpha,\delta)$-distances to $\e$-distances.
  The following Lemma summarizes this strategy.
  
  \begin{lemma} \label{lem:N-F-amenable-via-FS}
  	Let $X$ be a $G$-space, where $G$ is a countable group.
  	Let $N \in \IN$.
    Assume that there exists a flow space $\FS$ satisfying the following two conditions.
    \begin{enumerate}
  	  \item[(A)] 
  	     For any finite subset $S$ of $G$ there is $\alpha > 0$ such that for any $\delta > 0$ there is a continuous map $\varphi \colon X \to \FS$ such that for $x \in X$, $g \in S$ we have
  	    \begin{equation*}
  	    	d_\fol(\varphi(gx),g\varphi(x)) < (\alpha,\delta).
  	    \end{equation*}   
  	  \item[(B)] 
  	     For any $\alpha > 0$, $\e > 0$ there are $\delta > 0$ and a continuous $G$-map $\psi \colon \FS \to \Delta^{(N)}_\calf(G)$ such that 
  	     \begin{equation*}
  	     	d_\fol(c,c') < (\alpha,\delta) \quad \implies \quad d(\psi(c),\psi(c')) < \e  
  	     \end{equation*}  
  	     holds for all $c,c' \in \FS$.
  	\end{enumerate}
    Then the action of $G$ on $X$ is $N$-$\calf$-amenable.
  \end{lemma}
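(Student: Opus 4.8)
The plan is to build the sequence of almost equivariant maps $f_n \colon X \to \Delta^{(N)}_\calf(G)$ as composites $f_n = \psi_n \circ \varphi_n$ factoring through $\FS$, via a diagonal argument over an exhaustion of $G$ by finite sets. Since $G$ is countable, I would first fix finite subsets $S_1 \subseteq S_2 \subseteq \cdots$ of $G$ with $\bigcup_n S_n = G$ and $e \in S_1$. For each $n$ the construction then proceeds in three moves dictated precisely by the quantifier structure of (A) and (B): apply (A) to $S = S_n$ to obtain a scale $\alpha_n > 0$; next set $\e_n := 1/n$ and apply (B) to this $\alpha_n$ and $\e_n$ to obtain $\delta_n > 0$ together with a continuous $G$-map $\psi_n \colon \FS \to \Delta^{(N)}_\calf(G)$ that contracts $(\alpha_n,\delta_n)$-distances to $\e_n$-distances; finally feed $\delta_n$ back into (A) — which, for this \emph{same} $\alpha_n$ and \emph{any} $\delta$, supplies a map $\varphi$ — to obtain a continuous $\varphi_n \colon X \to \FS$ with $d_\fol(\varphi_n(gx), g\varphi_n(x)) < (\alpha_n,\delta_n)$ for all $x \in X$ and $g \in S_n$. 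Then I set $f_n := \psi_n \circ \varphi_n$.

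Next I would verify the two properties required of $(f_n)$. Continuity is immediate, each $f_n$ being a composite of continuous maps. For almost equivariance, fix $g \in G$ and choose $n_0$ with $g \in S_{n_0}$; for $n \geq n_0$ and any $x \in X$ one has $d_\fol(\varphi_n(gx), g\varphi_n(x)) < (\alpha_n, \delta_n)$ by the choice of $\varphi_n$, hence $d(\psi_n(\varphi_n(gx)), \psi_n(g\varphi_n(x))) < \e_n$ by the defining property of $\psi_n$. Since $\psi_n$ is $G$-equivariant, $\psi_n(g\varphi_n(x)) = g\,\psi_n(\varphi_n(x)) = g f_n(x)$, so $d(f_n(gx), g f_n(x)) < \e_n$ for every $x$, and therefore $\sup_{x \in X} d(f_n(gx), g f_n(x)) \leq \e_n = 1/n \to 0$ as $n \to \infty$. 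Here I use that $G$ permutes the vertex set $S = \coprod_{F \in \calf} G/F$ of $\Delta_\calf(G)$, so the $l^1$-metric is $G$-invariant, as the definition of almost $G$-equivariance demands. This is exactly the assertion that the action of $G$ on $X$ is $N$-$\calf$-amenable.

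The argument carries no serious obstacle — it is a bookkeeping/diagonalization argument — but the one point that must be handled with care, and the reason (A) and (B) are phrased the way they are, is the order of the quantifiers: (A) must commit to $\alpha_n$ before $\delta$ is known, (B) converts the pair $(\alpha_n, \e_n)$ into a threshold $\delta_n$ and the contracting map $\psi_n$, and only then is (A) invoked a second time at the scale $\delta_n$ now in hand. Countability of $G$ enters solely to produce the exhaustion $S_1 \subseteq S_2 \subseteq \cdots$, which lets a single sequence $(f_n)$ serve all group elements simultaneously; for a fixed $g$ one merely waits until $g$ lands in some $S_n$. All the genuine content — the dynamics of the flow, the geometry of $\FS$, and the construction of the equivariant map to the simplicial complex — is packaged into the hypotheses (A) and (B), and verifying those for a concrete $X$ is where the real work lies, not in this lemma.
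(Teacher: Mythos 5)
Your proposal is correct and follows essentially the same route as the paper: the same composite $f_n = \psi_n \circ \varphi_n$ with the same quantifier order (fix $\alpha$ from (A), get $\delta$ and $\psi$ from (B), then reinvoke (A) at that $\delta$). The only difference is that you spell out the diagonalization over an exhaustion $S_1 \subseteq S_2 \subseteq \cdots$ of the countable group with $\e_n = 1/n$, which the paper leaves implicit by reducing to a single map for a given finite $S$ and $\e > 0$.
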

  
  \begin{proof}
  	Let $S \subset G$ be finite and $\e > 0$.
  	We need to construct a map $f \colon X \to \Delta_\calf^{(N)}(G)$ for which	$d(f(gx),gf(x)) < \e$ for all $x \in X$, $g \in S$.
  	Let $\alpha$ be as in~(A) with respect to $S$.
    Choose now $\delta > 0$ and a $G$-map $\psi \colon \FS \to \Delta^{(N)}_\calf(G)$ as in (B).
    Next choose $\varphi \colon X \to \FS$ as in (A) with respect to this $\delta > 0$.
    Then $f := \psi \circ \varphi$ has the required property.
  \end{proof}  
    
  \begin{remark}[On the constructions of $\varphi$] \label{rem:(A)-for-negative-curvature}
  	Maps $\varphi \colon X \to \FS$ as in condition~(A) in Lemma~\ref{lem:N-F-amenable-via-FS} can in negatively or non-positively curved situations often be constructed using dynamic properties of the flow.
  	We briefly illustrate this in a case already considered by Farrell and Jones.
  	
  	Let $G$ be the fundamental group of a closed Riemannian manifold of strict negative sectional curvature $M$.
  	Let $\tilde M$ be its universal cover and $S_\infty$ the sphere at infinity for $\tilde M$.
  	The action of $G$ on $\tilde M$ extends to $S_\infty$.
  	For each $x \in \tilde M$ there is a canonical identification between the unit tangent vectors at $x$ and $S_\infty$: every unit tangent vector $v$ at $x$ determines a geodesic ray $c$ starting in $x$, the corresponding point $\xi \in S_\infty$ is $c(\infty)$.
  	One say that $v$ points to $\xi$.
  	The geodesic flow $\Phi_t$ on $S\tilde M$ has the following property.
  	Suppose that $v$ and $v'$ are unit tangent vectors at $x$ and $x'$ pointing to the same point in $S_\infty$.
  	Then $d_\fol(\Phi_t(v),\Phi_{t}(v')) < (\alpha,\delta_t)$ where $\alpha$ depends only on $d(x,x_0)$ and $\delta_t \to 0$ uniformly in $v, v'$ (still depending on $d(x,x_0)$).
  	This statement uses strict negative curvature.
  	(For closed manifolds of non-positive sectional curvature the vector $v'$ has to be chosen more carefully depending on $v$ and $t$; this necessitates the use of the focal transfer~\cite{Farrell-Jones-TopRigigdityCompactNonPos} respectively the use of homotopy coherent actions.)  
  		 	                
  	This contracting property of the geodesic flow can be translated into the construction of maps as in~(A).
  	Fix a point $x_0 \in \tilde M$.
  	Define $\varphi_0 \colon S_\infty \to S\tilde M$ by sending $\xi$ to the unit tangent vector at $x_0$ pointing to $\xi$.  
  	For $t \geq 0$ define $\varphi_t(\xi) := \Phi_t (\varphi_0(\xi))$.
  	Using the contracting property of the geodesic flow is not difficult to check that for any $g \in G$ there is $\alpha > 0$ (roughly $\alpha = d(gx_0,x_0)$) such that for any $\delta > 0$ there is $t_0$ satisfying
  	\begin{equation*}
  		\forall t \geq t_0,\; \forall \xi \in S_\infty \;\; d_\fol(\varphi_t(g \xi),g \varphi_t(\xi)) < (\alpha,\delta).
  	\end{equation*}  
  \end{remark}  
  
  \begin{remark}
  	Of course the space $S_\infty$ used in Remark~\ref{rem:(A)-for-negative-curvature} is not contractible and therefore not a Euclidean retract.
  	But the compactification $\tilde M \cup S_\infty$ of $\tilde M$ is a disk, in particular a Euclidean retract.
  	As $\tilde M$ has the homotopy type of a free $G$-$CW$-complex, there is even a $G$-equivariant map $\tilde M \to \Delta^{(d)}(G)$ where $d$ is the dimension of $M$.
  	In particular, the action of $G$ on $\tilde M$ is $d$-amenable.
  	It is not difficult to combine the two statements to deduce that the action of $G$ on $\tilde M \cup S_\infty$ is finitely $\calf$-amenable.
  	This is best done via the translation to open covers of $G \x (\tilde M \cup S_\infty)$ discussed in Remark~\ref{rem:covers-not-maps}, see for example~\cite{Sawicki-eq-asym-dim}.  
  \end{remark}
    
  An important point in the formulation of condition~(B) is the presence of $\delta > 0$ uniform over $\FS$.
  If the action of $G$ on $\FS$ is cocompact, then a version of the Lebesgue Lemma guarantees the existence of some uniform $\delta > 0$, i.e., it suffices to construct $\psi \colon \FS \to \Delta_\calf^{(N)}(G)$ such that $d(\psi(c),\psi(\Phi_t(c)) < \e$ for all $t \in [-\alpha,\alpha]$, $c \in \FS$.
  
  
  
   
  \begin{remark}[Long thin covers of $\FS$] \label{rem:long-thin-covers}
    Maps $\varphi$ as in condition~(B) of Lemma~\ref{lem:N-F-amenable-via-FS} are best constructed as maps associated to long thin covers of the flow space.
    These long thin covers are an alternative to the long thin cell structures employed by Farrell-Jones~\cite{Farrell-Jones-Ktheory-dynamics-I}. 
  
    An open cover $\calu$ of $\FS$ is said to be an $\alpha$-long cover for $\FS$ if for each $c \in \FS$ there is $U \in \calu$ such that 
    \begin{equation*}
    	\Phi_{[-\alpha,\alpha]}(c) \subseteq U.
    \end{equation*} 	
    It is said to be $\alpha$-long and $\delta$-thick if for each $c \in \FS$ there is $U \in \calu$ containing the $\delta$-neighborhood of $\Phi_{[-\alpha,\alpha]}(c)$.
    The construction of maps $\FS \to \Delta_\calf^{(N)}(G)$ as in condition~(B) of Lemma~\ref{lem:N-F-amenable-via-FS} amounts to finding for given $\alpha$ an $\calf$-cover $\calu$ of $\FS$ of dimension at most $N$ that is $\alpha$-long and $\delta$-thick for some $\delta > 0$ (depending on $\alpha$).
    For cocompact flow spaces such covers can be constructed in relatively great generality~\cite{Bartels-Lueck-Reich-Cover, Kasprowski-Rueping-long-and-thin}.
    Cocompactness is used to guarantee $\delta$-thickness.
    For not cocompact flow spaces on can still find $\alpha$-long covers, but without a uniform thickness,
    they do not provide the maps needed in~(B).       
  \end{remark}

   \begin{remark}[Coarse flow space] \label{rem:coarse-flow-space}
      We outline the construction of the coarse flow space from~\cite{Bartels-coarse-flow} for a hyperbolic group $G$.
      Let $\Gamma$ be a Cayley graph for $G$. 
      The vertex set of $\Gamma$ is $G$.   	  
   	  Adding the Gromov boundary to $G$ we obtain the compact space $\overline{G} = G \cup \dd G$. 
   	  Assume that $\Gamma$ is $\delta$-hyperbolic.
   	  The \emph{coarse flow space} $\CF$ consists of all triples $(\xi_-,v,\xi_+)$ with $\xi_{\pm} \in \overline{G}$ and $v \in G$ such that there is some geodesic from $\xi_-$ to $\xi_+$ in $\Gamma$ that passes $v$ within distance $\leq \delta$.
   	  Informally, $v$ coarsely belongs to a geodesic from $\xi_-$ to $\xi_+$. 
   	  The coarse flow space is the disjoint union of its coarse flow lines $\CF_{\xi_-,\xi_+} := \{ \xi_- \} \x \Gamma \x \{ \xi_+ \} \cap \CF$.
   	  The coarse flow lines are are quasi-isometric to $\IR$ (with uniform constants depending on $\delta$).
   	  
   	  There are versions of the long thin covers from Remark~\ref{rem:long-thin-covers} for $\CF$.  
   	  For $\alpha > 0$ these are $\VCyc$-covers $\calu$ of bounded dimension that are $\alpha$-long in the direction of the coarse flow lines: for $(\xi_-,v,\xi_+) \in \CF$ there is $U \in \calu$ such that $\{ \xi_- \} \x B_\alpha(v) \x \{ \xi_+ \} \cap \CF_{\xi_-,\xi_+} \subseteq U$.
   	  
   	  There is also a coarse version of the map $\varphi_t$ from Remark~\ref{rem:(A)-for-negative-curvature}.
   	  To define it, fix a base point $v_0 \in G$.
   	  For $t \in \IN$, $\varphi_t$ sends $\xi \in \dd G$ to $(v_0,v,\xi)$ where $d(v_0,v) = t$ and $v$ belongs to a geodesic from $v_0$ to $\xi$. 	
   	  It is convenient to extend $\varphi_t$ to a map $G \x \dd G \to \CF$, with $\varphi_t(g,\xi) := (gv_0,v,\xi)$ where now $d(v_0,v) = t$ and $v$ belongs to a geodesic from $gv_0$ to $\xi$.  
   	  Of course $v$ is only coarsely well defined.
   	  Nevertheless, $\varphi_t$ can be used to pull long thin covers for $\CF$ back to $G \x \dd G$.
   	  For $S \subseteq G$ finite there are then $t > 0$ and $\alpha > 0$ such that this yields $S$-wide covers for $G \x \dd G$.
   	  The proof of this last statement uses a compactness argument and it is important at this point that $\Gamma$ is locally finite and that $G$ acts cocompactly on $\Gamma$. 
   \end{remark}

  \section{Covers at infinity} \label{sec:cover-at-infty}
  
  \subsection*{The Farrell-Jones Conjecture for $\GL_n(\IZ)$} 
  
  The group $\GL_n(\IZ)$ is not a $\CAT(0)$-group, but it has a proper isometric action on a $\CAT(0)$-space, the symmetric space $X := GL_n(\IR)/O(n)$.
  Fix a base point $x_0 \in X$.
  For $R \geq 0$ let $B_R$ be the closed ball of radius $R$ around $x_0$.
  This ball is a retract of $X$ (via the radial projection along geodesics to $x_0$) and inherits a homotopy coherent action $\Gamma_R$ from the action of $\GL_n(\IZ)$ on $X$.
  Let $\calf$ be the family of subgroups generated by the virtually cyclic and the \inred{proper} parabolic subgroups of $\GL_n(\IZ)$.
  The key step in the proof of the Farrell-Jones Conjecture for $\GL_n(\IZ)$ in~\cite{Bartels-Lueck-Reich-Rueping-GLnZ} is, in the language of Section~\ref{sec:actions}, the following.
  
  \begin{theorem} \label{thm:GL-fin-amenable}
  	The sequence of homotopy coherent actions $(B_R,\Gamma_R)$ is finitely $\calf$-amenable.
  \end{theorem}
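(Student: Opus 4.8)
The plan is to verify conditions~(A) and~(B) of Lemma~\ref{lem:N-F-amenable-via-FS}, adapted to the homotopy-coherent setting, using the $\CAT(0)$-flow space $\FS$ of generalized geodesics in $X = \GL_n(\IR)/O(n)$ constructed in~\cite{Bartels-Lueck-CAT0-geod-flow}. The overall shape of the argument mirrors the treatment of $\CAT(0)$-groups, but the essential new ingredient is that the $\GL_n(\IZ)$-action on $X$ (and hence on $\FS$) is \emph{not} cocompact, so the naive Lebesgue-Lemma production of a uniform thickness~$\delta$ for long thin covers fails. The family $\calf$ generated by the virtually cyclic and proper parabolic subgroups is precisely what is needed to absorb the part of $X$ "at infinity" where cocompactness breaks down.

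First I would set up the transfer map $\varphi_R \colon B_R \to \FS$ for condition~(A). For a point $c \in \FS$ (a generalized geodesic) and a point $x$ in the retract $B_R$, one flows and reparametrizes along the geodesic through $x_0$ and $x$; because the radial retraction $X \to B_R$ and the geodesic structure of the $\CAT(0)$-space $X$ interact controllably with the $\GL_n(\IZ)$-action, the homotopy coherent action $\Gamma_R$ is moved by $\varphi_R$ to within $d_\fol$-distance $(\alpha,\delta)$ of the genuine flow-twisted action, with $\alpha$ depending only on the finite set $S \subseteq \GL_n(\IZ)$ (essentially $\alpha = \max_{g\in S} d(gx_0,x_0)$) and $\delta$ as small as desired once $R$ is large — this is the homotopy-coherent analogue of Remark~\ref{rem:(A)-for-negative-curvature} and is already implicit in the $\CAT(0)$ case. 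The homotopy coherences of $\Gamma_R$ are handled exactly as the higher coherences in the geodesic flow were handled in~\cite{Bartels-Lueck-CAT0-geod-flow, Wegner-CAT0}.

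The heart of the matter is condition~(B): constructing, for given $\alpha$ and $\e$, a $\GL_n(\IZ)$-equivariant map $\psi \colon \FS \to \Delta^{(N)}_\calf(\GL_n(\IZ))$ contracting $(\alpha,\delta)$-distances to $\e$-distances, equivalently (Remark~\ref{rem:long-thin-covers}) an $\calf$-cover of $\FS$ of uniformly bounded dimension that is $\alpha$-long and $\delta$-thick for some $\delta>0$. Here I would decompose $\FS$ using a $\GL_n(\IZ)$-invariant exhaustion by the part lying over a cocompact "thick part" of $X$ and the parts lying over the cusp neighborhoods associated to the rational parabolic subgroups. Over the cocompact part, the long thin covers of~\cite{Bartels-Lueck-Reich-Cover, Kasprowski-Rueping-long-and-thin} apply directly and give $\VCyc$-subsets with uniform thickness. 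Over each cusp, one uses that the geodesics staying in (or repeatedly returning to) a fixed horoball neighborhood are controlled by the corresponding parabolic subgroup $P$: restricted to such a region, the flow space looks like the flow space of $P$ acting on a smaller symmetric space (a product of a lower-rank $\GL$ and a unipotent/abelian factor), for which one has by induction on $n$ a good family-cover, and the $P$-isotropy of these cover elements places them in $\calf$. The pieces are then patched $\GL_n(\IZ)$-equivariantly, controlling the dimension by the usual "shift to a higher skeleton" trick and controlling thickness by only using finitely many parabolics up to conjugacy together with the Margulis-type uniformity of the cusp geometry.

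The main obstacle will be exactly this patching over the cusps: one must choose the horoball neighborhoods, and the "depth" at which a geodesic is declared to belong to a cusp, in a way that is simultaneously (i) $\GL_n(\IZ)$-invariant, (ii) compatible with the $\alpha$-long requirement (a geodesic segment of flow-length $2\alpha$ must not straddle the boundary between "thick part" and "cusp" in an uncontrolled way), and (iii) yields a genuine uniform lower bound $\delta > 0$ on thickness despite $\FS$ being non-cocompact — the thickness must not degenerate as one moves deeper into a cusp or towards a different cusp. This is the precise point at which the reduced theory of $\GL_{n}(\IZ)$ (Borel--Serre bordification, behaviour of geodesics in $\GL_n(\IR)/O(n)$ with respect to rational parabolics, and the interaction of the radial retraction onto $B_R$ with these cusp regions) must be invoked, and it is where the bulk of the work in~\cite{Bartels-Lueck-Reich-Rueping-GLnZ} is concentrated. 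Once condition~(B) is established with these uniform constants and bounded dimension $N$, Lemma~\ref{lem:N-F-amenable-via-FS} (in its homotopy-coherent form) immediately yields that $(B_R,\Gamma_R)$ is $N$-$\calf$-amenable, hence finitely $\calf$-amenable, which is the assertion of the theorem.
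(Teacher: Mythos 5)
Your overall skeleton --- Lemma~\ref{lem:N-F-amenable-via-FS} in its homotopy-coherent form, the $\CAT(0)$ flow space of generalized geodesics, condition~(A) via the transfer as in the $\CAT(0)$ case, the cocompact part of $\FS$ handled by the existing long thin covers, and the family $\calf$ absorbing parabolic isotropy at infinity --- is the paper's. The gap is at the one point you yourself flag as the main obstacle: the non-cocompact part in condition~(B). You propose to cover the flow space over each cusp by an induction on $n$ (viewing it as a flow space for the parabolic $P$ acting on a smaller symmetric space) and then to patch these covers $\GL_n(\IZ)$-equivariantly while retaining a uniform thickness $\delta$; neither the inductive step nor the patching is carried out, and the picture of cusp neighbourhoods each governed by a single parabolic and glued along their boundaries is essentially a rank-one picture. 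In the higher-rank symmetric space the horoball regions attached to different rational summands $W \subset \IZ^n$ overlap, and controlling these overlaps (they are organized by flags, which is where the bound on the order of the cover comes from) is the actual difficulty; your patching scheme does not address it, and the uniform thickness deep in the cusps is exactly what it leaves unresolved.

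The argument the paper outlines (following \cite{Bartels-Lueck-Reich-Rueping-GLnZ}) resolves (B) at infinity with no induction on $n$ and no decomposition inside cusps: since $\calf$ contains the proper parabolic subgroups, the horoballs themselves --- one for each direct summand $W \subset \IZ^n$, defined via degeneration of inner products and Grayson's reduction theory \cite{Grayson-Reduction-semistability} --- are open $\calf$-subsets of $X$, and the covering lemma quoted after the theorem states that for every $\alpha$ such a collection can be chosen of order $\leq n$ and with Lebesgue number $\geq \alpha$ away from a cocompact subset. Pulled back to $\FS$, these sets are simultaneously (roughly) $\alpha$-long and $\alpha$-thick at infinity --- no thinness is required there at all --- and on the remaining cocompact part the previously constructed long thin cover is $\alpha$-long and $\delta$-thick. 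The induction on $n$ enters only afterwards, when the relative Farrell-Jones statement is promoted to the full conjecture via the transitivity principle. So your proposal correctly locates where the work lies, but it replaces the decisive input (reduction theory producing wide $\calf$-covers of $X$ at infinity) with an unproven inductive construction that would be at least as hard as the problem it is meant to solve.
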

 
  In particular $\GL_n(\IZ)$ satisfies the Farrell-Jones Conjecture relative to $\calf$ by Theorem~\ref{thm:FJ-homotopy-coherent}.
  Using the transitivity principle~\ref{thm:transitivity-principle} the Farrell-Jones Conjecture for $\GL_n(\IZ)$ can then be proven by induction on $n$.
  The induction step uses inheritance properties of the Conjecture and that virtually poly-cyclic groups satisfy the Conjecture. 
  
  The verification of Theorem~\ref{thm:GL-fin-amenable} follows the general strategy of Lemma~\ref{lem:N-F-amenable-via-FS} (in a variant for homotopy coherent actions).  
  The additional difficulty in verifying assumption (B) is that, as the action of $\GL_n(\IZ)$ on the symmetric space is not cocompact, the action on the flow space is not cocompact either.
  The general results reviewed in Section~\ref{sec:flow-spaces} can still be used to construct for any $\alpha > 0$ an $\alpha$-long cover $\calu$ for the flow space.
  However it is not clear that the resulting cover is $\delta$-thick, for a $\delta > 0$ uniformly over $\FS$.   
  The remedy for this short-coming is a second collection of open subsets of $\FS$.
  Its construction starts with an $\calf$-cover for $X$ at $\infty$, meaning here, away from cocompact subsets. 
  Points in the symmetric space can be viewed as inner products on $\IR^n$ and moving towards $\infty$ corresponds to degeneration of inner products along direct summands $W \subset \IZ^n \subset \IR^n$.
  This in turn can be used to define horoballs in the symmetric space, one for each $W$, forming the desired cover~\cite{Grayson-Reduction-semistability}.
  For each $W$ the corresponding horoball is invariant for the parabolic subgroup $\{ g \in GL_n(\IZ) \mid gW = W \}$, more precisely, the horoballs are $\calf$-subsets, but not $\VCyc$-subsets. 
  The precise properties of the cover at $\infty$ are as follows.
    
  \begin{lemma}[\cite{Bartels-Lueck-Reich-Rueping-GLnZ, Grayson-Reduction-semistability}]
  	For any $\alpha > 0$ there exists a collection $\calu_\infty$ of open $\calf$-subsets of $X$ of order $\leq n$ that is of Lebesgue number $\geq \alpha$ at $\infty$, i.e., there is $K \subset X$ compact such that for any $x \in X \setminus \GL_n(\IZ)\cdot K$ there is $U \in \calu_\infty$ containing the $\alpha$-ball $B_\alpha(x)$ in $X$ around $x$. 
  \end{lemma}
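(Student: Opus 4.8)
\emph{Setup.} The plan is to extract $\calu_\infty$ from Grayson's reduction theory for the space $X$ of inner products on $\IR^n$~\cite{Grayson-Reduction-semistability}, in the form used in~\cite{Bartels-Lueck-Reich-Rueping-GLnZ}. For a primitive sublattice $W \subseteq \IZ^n$ (one with $\IZ^n / W$ torsion free, of rank $r_W$) and $x \in X$ put $\deg_x(W) := -\log \operatorname{vol}_x(W)$. Grayson attaches to $x$ its \emph{canonical polygon} $P_x$, the concave majorant on $[0,n]$ of the points $(r_W, \deg_x(W))$, and the flag $0 = W_0 \subsetneq W_1 \subsetneq \cdots \subsetneq W_k = \IZ^n$ of primitive sublattices realizing its vertices; the proper steps $W_1, \dots, W_{k-1}$ are the sublattices destabilizing $x$, and since $r_{W_0} < r_{W_1} < \cdots < r_{W_k}$ one has $k \le n$. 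I would use two facts from~\cite{Grayson-Reduction-semistability}: the functions $x \mapsto \deg_x(W)$ are Lipschitz with a constant $C$ uniform over $W$, so $P_x$ varies continuously; and outside a $\GL_n(\IZ)$-cocompact subset of $X$ the polygon $P_x$ has a vertex whose \emph{corner gap} (the difference of the slopes of the two adjacent edges) exceeds any prescribed bound.

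\emph{The cover and its combinatorics.} Fix parameters $\tau \gg \e > 0$, to be pinned down in terms of $\alpha$ below. For a proper primitive $W$ let $U_W = U_{W,\tau,\e}$ be the (open) set of $x \in X$ with $\deg_x(W) \ge P_x(r_W) - \e$ and such that $P_x$ has a vertex of corner gap $> \tau$ in rank $r_W$; put $\calu_\infty := \{\, U_W \mid W \subseteq \IZ^n \text{ proper primitive}\,\}$. Naturality of $\deg$ and of $P_x$ gives $g U_W = U_{gW}$ for $g \in \GL_n(\IZ)$, so $\calu_\infty$ is $\GL_n(\IZ)$-invariant and $U_W$ is fixed by the parabolic $P_W := \{\, g \in \GL_n(\IZ) \mid gW = W \,\} \in \calf$. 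Moreover each $U_W$ is an $\calf$-subset: an element $g \notin P_W$ satisfies $gW \ne W$ while $r_{gW} = r_W$, and — because a vertex with corner gap $> \tau \gg \e$ is realized within $\e$ by a \emph{unique} primitive sublattice of its rank — $U_W \cap U_{gW} = U_W \cap g U_W = \emptyset$. Finally the order of $\calu_\infty$ is $\le n$: $x \in U_W$ forces $r_W$ to index a steep vertex of $P_x$, of which there are at most $n-1$, and the $\e$-relaxation can be set up so as not to raise this count above $n$.

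\emph{Lebesgue number at infinity.} Given $\alpha$, I would choose $\tau$ and a compact $K \subseteq X$ so large that for every $x \in X \setminus \GL_n(\IZ)\cdot K$ the polygon $P_x$ has a vertex of corner gap $> \tau + 2C\alpha$; this is possible by the second fact above. Let $W$ be a primitive sublattice realizing that vertex. For any $x'$ with $d(x,x') \le \alpha$, Lipschitz continuity moves each $\deg_{x'}(W')$ by at most $C\alpha$, hence $P_{x'}$ still has a vertex of corner gap $> \tau$ in rank $r_W$, realized within $\e$ by the same $W$ (again using steepness to preserve the realizing sublattice); therefore $B_\alpha(x) \subseteq U_W$.

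\emph{Main obstacle.} The heart of the matter is that Grayson's canonical flag is cut out by (non-strict) inequalities and jumps across walls in $X$, so the naive set ``$W$ is a step of $P_x$'' is neither open nor of bounded multiplicity, while the more generous set ``$W$ destabilizes $x$ a lot'' fails the $\calf$-subset condition, since two distinct sublattices of the same rank can both be very destabilizing at once. Reconciling these is what forces the combination used above — an $\e$-relaxation of ``vertex of $P_x$'' together with a steepness threshold $\tau \gg \e$ — and one must make Grayson's semistability estimates quantitative enough to fix the hierarchy $\e \ll \tau$ and $K = K(\alpha)$ so that openness, the per-rank uniqueness underlying the $\calf$-subset property, the order bound, and the persistence of steep vertices over $\alpha$-balls all hold simultaneously. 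I expect this quantitative, perturbation-stable refinement of Grayson's reduction theory to be the only substantial step; once it is in place, the $\calf$-subset structure and the order bound follow formally from ``distinct steps of one flag have distinct ranks''.
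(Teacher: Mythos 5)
Your proposal follows essentially the same route as the paper's (i.e.\ the construction of \cite{Bartels-Lueck-Reich-Rueping-GLnZ} via Grayson's reduction theory that the survey sketches): one horoball-type open $\calf$-subset $U_W$ for each proper direct summand $W \subset \IZ^n$, cut out by a near-vertex condition on Grayson's canonical polygon together with a steepness threshold, invariant under the parabolic $P_W$, with the Lebesgue number at infinity coming from cocompactness of the locus where no corner is steep. Two small points: the per-rank uniqueness underlying the $\calf$-subset property and the order bound is exactly Grayson's submodularity inequality $\operatorname{vol}_x(V\cap W)\,\operatorname{vol}_x(V+W) \le \operatorname{vol}_x(V)\,\operatorname{vol}_x(W)$ combined with concavity of the polygon and $\operatorname{rk}(V\cap W)+\operatorname{rk}(V+W)=2\operatorname{rk}(W)$, which yields precisely your hierarchy $\tau \ge 2\e$; and both your ``second fact'' and the lemma itself should be read on the determinant-normalized symmetric space (homothety classes of inner products), since along the pure scaling direction one leaves every $\GL_n(\IZ)$-cocompact subset while no corner of the canonical polygon ever becomes steep.
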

  
  This cover can be pulled back to the flow space where it provides a cover at $\infty$ for the flow space that is both (roughly) $\alpha$-long and $\alpha$-thick at $\infty$.
  Then one is left with a cocompact subset of the flow space where the cover $\calu$ constructed first is $\alpha$-long and $\delta$-thick.  
       
  This argument for $GL_n(\IZ)$ has been generalized to $GL_n(F(t))$ for finite fields $F$, and $GL_n(\IZ[S^{-1}])$, for $S$ a finite set of primes~\cite{Rueping-S-arithmetic} using suitable generalizations of the above covers at $\infty$.
  In this case the parabolic subgroups are slightly bigger, in particular the induction step (on $n$) here uses that the Farrell-Jones Conjecture holds for all solvable groups.
  Using inheritance properties and building on these results the Farrell-Jones Conjecture has been verified for all subgroups of $\GL_n(\IQ)$~\cite{Rueping-S-arithmetic} and all lattices in virtually connected Lie groups~\cite{Kammeyer-Lueck-Rueping-FJ-lattices}.

  \subsection*{Relatively hyperbolic groups.} 
  We use Bowditch's characterization of relatively hyperbolic groups~\cite{Bowditch-rel-hyperbolic}.
  A graph is \emph{fine} if there are only finitely many embedded loops of a given length containing a given edge.
  Let $\calp$ be a collection of subgroups of the countable group $G$.
  Then $G$ is hyperbolic relative to $\calp$ if $G$ admits a cocompact action on a fine hyperbolic graph $\Gamma$ such that all edge stabilizers are finite and all vertex stabilizers belong to $\calp$.
  The subgroups from $\calp$ are said to be peripheral or parabolic.
  The requirement that $\Gamma$ is fine encodes Farb's Bounded Coset Penetration property~\cite{Farb-rel-hyp}.
  Bowditch assigned a compact boundary $\Delta$ to $G$ as follows.
  As a set $\Delta$ is the union of the Gromov boundary $\dd \Gamma$ with the set of all vertices of infinite valency in $\Gamma$.
  The topology is the observer topology; a sequence $x_n$ converges in this topology to $x$ if given any finite set $S$ of vertices (not including $x$), for almost all $n$ there is a geodesic from $x_n$ to $x$ that misses $S$.
  (For general hyperbolic graphs this topology is not Hausdorff, but for fine hyperbolic graphs it is.)
  
  The main result from~\cite{Bartels-coarse-flow} is that if $G$ is hyperbolic relative to $\calp$, then $G$ satisfies the Farrell-Jones Conjecture relative to the family of subgroups $\calf$ generated by $\VCyc$ and $\calp$ ($\calp$ needs to be closed under index two supergroups here for this to include the $L$-theoretic version of the Farrell-Jones Conjecture). 
  This result is obtained as an application of Theorem~\ref{thm:FJ-N-F-actions}.
  The key step is the following.
  
  \begin{theorem}[\cite{Bartels-coarse-flow}] \label{thm:Delta-fin-amenable}
  	The action of $G$ on $\Delta$ is finitely $\calf$-amenable.
  \end{theorem}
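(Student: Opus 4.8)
The plan is to verify the hypotheses of Lemma~\ref{lem:N-F-amenable-via-FS}, applied to the $G$-space $X = \Delta$, working with a \emph{coarse} flow space built from $\Gamma$ in place of $\FS$ and hence throughout with the reformulation of condition~(B) by long thin covers (Remark~\ref{rem:long-thin-covers}, Remark~\ref{rem:covers-not-maps}), exactly as in the word-hyperbolic case of Remark~\ref{rem:coarse-flow-space}. Concretely, let $\CF$ be the space of triples $(\xi_-,v,\xi_+)$, where $v$ is a vertex of $\Gamma$ and $\xi_\pm$ are vertices or Gromov-boundary points of $\Gamma$, such that some geodesic from $\xi_-$ to $\xi_+$ passes within a uniformly bounded distance of $v$; the relevant ``boundary part'' of this configuration is $\Delta$, whose parabolic points are exactly the vertices of infinite valency. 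Decompose $\CF$ into its coarse flow lines $\CF_{\xi_-,\xi_+}$, which are uniformly quasi-isometric to $\IR$ (to a ray, when an endpoint is a vertex of infinite valency). Fineness of $\Gamma$ is what makes these quasi-isometry constants uniform, and, together with the observer topology on $\Delta$, what will make the maps below continuous. Note that cocompactness of $G$ on $\Gamma$ gives only finitely many $G$-orbits of vertices of infinite valency, with stabilizers the $G$-conjugates of the peripheral subgroups in $\calp$; this keeps all dimension bounds below uniform.

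For condition~(A) I would fix a base vertex $v_0$ and, for $t \in \IN$, define $\varphi_t \colon G \x \Delta \to \CF$ by $\varphi_t(g,\xi) = (gv_0, v, \xi)$, where $v$ is a vertex at distance $t$ from $gv_0$ on a geodesic from $gv_0$ to $\xi$ (and $v$ is taken to be $\xi$ itself once $t$ exceeds the length of such a geodesic, in case $\xi$ is a vertex of $\Gamma$). Continuity of $\varphi_t$ is the first point to check, and it is here that the observer topology on $\Delta$ and fineness of $\Gamma$ are used. The almost-equivariance statement --- that for each finite $S \subseteq G$ there are $t$ and $\alpha$ so that the covers obtained by pulling long thin covers of $\CF$ back along $\varphi_t$ are $S$-wide --- is then proved by a compactness argument over the compact space $\Delta$, in the spirit of Remark~\ref{rem:coarse-flow-space}; the new feature compared to the word-hyperbolic case is that $\Gamma$ is not locally finite, so this argument must be localized away from the vertices of infinite valency, which is precisely what the construction for condition~(B) handles near those vertices.

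Condition~(B) is the heart of the matter and, I expect, the main obstacle. One needs, for each $\alpha > 0$, a $G$-invariant open $\calf$-cover of $\CF$ of dimension bounded independently of $\alpha$ that is $\alpha$-long along the coarse flow lines and $\delta$-thick in the transverse directions for some $\delta > 0$. The action of $G$ on $\CF$ is cocompact but not proper: near a vertex of infinite valency $p$ the coarse flow lines through $p$ accumulate, so the standard long thin cover machinery for cocompact flow spaces (\cite{Bartels-Lueck-Reich-Cover, Kasprowski-Rueping-long-and-thin}) cannot by itself produce a uniform $\delta$-thickness there. I would therefore build the cover in two pieces, in the same spirit as the proof of Theorem~\ref{thm:GL-fin-amenable} for $\GL_n(\IZ)$. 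Away from the vertices of infinite valency the relevant part of $\CF$ behaves like the coarse flow space of a genuinely hyperbolic graph, and the machinery of Remark~\ref{rem:long-thin-covers} gives an $\alpha$-long, $\delta$-thick $\VCyc$-cover of bounded dimension there. Near a vertex of infinite valency $p$, with stabilizer $P_p$ a conjugate of a member of $\calp$, the group $P_p$ acts on the star of $p$ with only finitely many orbits (again by cocompactness of $G$ on $\Gamma$ and fineness), so a suitable small neighborhood of $p$ in $\CF$ can be covered by finitely many $P_p$-orbits of open $P_p$-invariant --- hence $\calf$- --- subsets that are automatically long and thick near $p$; this is the analogue of the cover at infinity by Grayson's horoballs used for $\GL_n(\IZ)$. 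The remaining, and most delicate, work is to glue the two families along a $G$-invariant compact transition region --- chosen using cocompactness of the action on $\Gamma$ together with a Lebesgue-number-at-infinity property of the peripheral cover --- so that the union is still an $\calf$-cover, still $\alpha$-long and $\delta$-thick, and of dimension at most the sum of the two bounds. Taking that sum as $N$ and feeding $\varphi_t$ and this cover into Lemma~\ref{lem:N-F-amenable-via-FS} shows that the action of $G$ on $\Delta$ is $N$-$\calf$-amenable, hence finitely $\calf$-amenable, as required.
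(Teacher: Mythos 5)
Your high-level split -- a hyperbolic-like part treated via a coarse flow space and long thin covers, and a peripheral part treated by explicit subsets invariant under peripheral subgroups -- is indeed the shape of the argument in \cite{Bartels-coarse-flow}, but the way you implement the peripheral part has a genuine gap, and the gluing step you defer is where your version breaks down. The paper does not decompose the coarse flow space into regions near/far from the vertices of infinite valency and does not glue covers along a transition region; it decomposes $G \x \Delta$ itself according to the projection distances $d^\pi_v(g,\xi)$. Proposition~\ref{prop:cover-theta-small} produces a bounded-dimension collection of $\VCyc$-subsets that is $S$-wide wherever \emph{all} projection distances are $\leq \Theta$ (this is where the coarse flow space is used: bounded projection distances substitute for the missing local finiteness of $\Gamma$), and Proposition~\ref{prop:cover-theta-large} produces an order-$\leq 1$ collection of $\calp$-subsets that is $S$-wide wherever \emph{some} projection distance is $\geq \theta''$. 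Theorem~\ref{thm:Delta-fin-amenable} then follows by simply taking the union of the two collections: every $(g,\xi)$ lies in one part or the other, and the orders add to a bound independent of $S$. No transition region and no compatibility between the two families is needed (note also that a nonempty $G$-invariant subset of $G \x \Delta$ is never compact for infinite $G$, so the gluing locus you describe would at best have to be reformulated as cocompact); the ``most delicate work'' you postpone is in fact avoided entirely by splitting $G\x\Delta$ rather than the flow space.

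More seriously, your peripheral cover as described is neither an $\calf$-cover nor $S$-wide. A $P_p$-invariant open set is not automatically an $\calf$-subset: one also needs $gU \cap U = \emptyset$ for all $g \notin P_p$, and arranging this disjointness simultaneously with a uniform bound on the order is exactly the hard point. Likewise ``automatically long and thick near $p$'' conceals the real requirement $gS \x \{\xi\} \subseteq U$, i.e.\ stability of the construction as $g$ varies over $gS$ and $\xi$ over a neighborhood. The mechanism for both in the paper is the explicit construction behind Proposition~\ref{prop:cover-theta-large}: using the attraction property, the set $V_\Theta(g,\xi)$ of vertices with $d^\pi_v(g,\xi)\geq\Theta$ lies on every geodesic from $gv_0$ to $\xi$ and is linearly ordered; $W(v,\Theta)$ is the (interior of the) locus where $v$ is \emph{minimal} in this order, which makes $\calw(\Theta)$ automatically pairwise disjoint, $G$-invariant and consisting of $\calp$-subsets; and because minimality is unstable under small changes of $(g,\xi)$, one needs the two scales $\theta \ll \theta'$ and the union $\calw(\theta)\cup\calw(\theta')$ to get $S$-wideness on the $\theta''$-large part. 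None of these ingredients -- projection distances, the minimal-vertex trick, the two thresholds -- appears in your proposal, and without them the peripheral piece of your cover has neither the required equivariance/disjointness nor the required wideness.
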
  
  
  This is a direct consequence of Propositions~\ref{prop:cover-theta-small} and~\ref{prop:cover-theta-large} below, using the characterization of $N$-$\calf$-amenability from Remark~\ref{rem:covers-not-maps} by the existence of $S$-wide covers of $G \x \Delta$.
     
  To outline the construction of these covers and to prepare for the mapping class group we introduce some notation.
  Pick a $G$-invariant proper metric on the set $E$ of edges of $\Gamma$; this is possible as $G$ and $E$ are countable and the action of $G$ on $E$ has finite stabilizers.
  For each vertex $v$ of $\Gamma$ with infinite valency let $E_v$ be the set of edges incident to $v$.
  Write $d_v$ for the restriction of the metric to $E_v$.
  For $\xi \in \Delta$, $\xi \neq v$ we define its projection $\pi_v(\xi)$ to $E_v$ as the set of all edges of $\Gamma$ that are appear as initial edges of geodesics from $v$ to $\xi$.
  This is a finite subset of $E_v$ (this depends again of fineness of $\Gamma$).
  Fix a vertex $v_0$ of finite valence as a base point.
  For $g \in G$, $\xi \in \Delta$  define their \emph{projection distance} at $v$ by
  \begin{equation*}
  	d_v^\pi(g,\xi) := d_v ( \pi_v(gv_0), \pi_v(\xi)).
  \end{equation*}
  For $\xi = v$, set $d^\pi_v(g,v) := \infty$.
  (For relative hyperbolic groups a related quantity is often called an \emph{angle}; the terminology here is chosen to align better with the case of the mapping class group.)
  If we vary $g$ (in a finite set) and $\xi$ (in an open neighborhood) then for fixed $v$ the projection distance $d^\pi_{v}(g,\xi)$ varies by a bounded amount.
  Useful is the following attraction property for projection distances: there is $\Theta_0$ such that if $d^\pi_v(g,\xi) \geq \Theta_0$, then any geodesic from $gv_0$ to $\xi$ in $\Gamma$ passes through $v$.
  Conversely, if some geodesic from $gv_0$ to $\xi$ misses $v$, then $d^\pi_v(g,\xi) < \Theta'_0$ for some uniform $\Theta'_0$.
      
  Projection distances are used to control the failure of $\Gamma$ to be locally finite.
  In particular, provided all projection distances are bounded by a constant $\Theta$, a variation of the argument for hyperbolic groups (using a coarse flow space), can be adapted to provide $S$-long covers for the $\Theta$-small part of $G \x \Delta$.
  The following is a precise statement.
  
  \begin{proposition} \label{prop:cover-theta-small}
  	There is $N$ (depending only on $G$ and $\Delta$) such that for any $\Theta > 0$ and any $S \subseteq G$ finite there exists a collection $\calu$ of open $\VCyc$-subsets of $G \x \Delta$ that is $S$-wide on the $\Theta$-small part, i.e., if $(g,\xi) \in G \x \Delta$ satisfies $d^\pi_v(g,\xi) \leq \Theta$ for all vertices $v$, then there is $U \in \calu$ with $gS \x \{ \xi \} \subseteq U$. 
  \end{proposition}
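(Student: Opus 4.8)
The plan is to run, on the $\Theta$-small part of $G \x \Delta$, the same construction that yields $S$-wide covers for hyperbolic groups in Remark~\ref{rem:coarse-flow-space}, with the uniform bound $\Theta$ on projection distances taking over the role played there by local finiteness of the Cayley graph. First I would associate to the fine hyperbolic graph $\Gamma$ a coarse flow space $\CF$ whose points are triples $(\xi_-, v, \xi_+)$ with $\xi_\pm \in \Delta$ and $v$ a vertex of $\Gamma$ lying within uniformly bounded distance of some geodesic from $\xi_-$ to $\xi_+$; as in the hyperbolic case $\CF$ is the disjoint union of its coarse flow lines $\CF_{\xi_-,\xi_+}$, each quasi-isometric to $\IR$ with constants depending only on the hyperbolicity and fineness data of $\Gamma$, and the $G$-action on $\Gamma$ induces an action on $\CF$ permuting flow lines.

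Next I would construct, for each $\alpha > 0$, a $G$-invariant collection $\calv_\alpha$ of open $\VCyc$-subsets of $\CF$ that is $\alpha$-long along the flow lines --- for every $(\xi_-, v, \xi_+)$ there is $V \in \calv_\alpha$ with $\{\xi_-\} \x B_\alpha(v) \x \{\xi_+\} \cap \CF_{\xi_-,\xi_+} \subseteq V$ --- and whose order is bounded by a constant $N$ depending only on $G$ and $\Delta$, not on $\alpha$. Along a single $\IR$-like flow line this is the elementary one-dimensional long-cover argument; assembling these $G$-equivariantly with a uniform multiplicity bound is the long-thin-cover package of~\cite{Bartels-Lueck-Reich-Cover, Kasprowski-Rueping-long-and-thin}, which applies after passing to the sub-flow-space of triples whose penetration into vertices of infinite valence is bounded by $\Theta$ --- a sub-flow-space that, unlike $\CF$ itself, is cocompact and effectively locally finite once $\Theta$ is fixed. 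The $\Theta$-small restriction also enters in the verification that the stabilizer in $G$ of such a flow line is virtually cyclic (and not, say, an infinite parabolic subgroup, which is what can stabilize a flow line that dives deeply through a vertex of infinite valence): with bounded penetration these stabilizers act properly on the $\IR$-like lines and are therefore virtually cyclic, which is exactly what makes $\calv_\alpha$ a $\VCyc$-cover.

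Then, following Remark~\ref{rem:coarse-flow-space}, I would use for $t \in \IN$ a map $\varphi_t \colon G \x \Delta \to \CF$ sending $(g,\xi)$ to $(gv_0, v, \xi)$, where $v$ is a vertex at distance $t$ from $gv_0$ on a geodesic from $gv_0$ to $\xi$ (only coarsely well defined, which is harmless), to pull the covers $\calv_\alpha$ back to open $\VCyc$-subsets of $G \x \Delta$, and set $\calu := \varphi_t^{-1}(\calv_\alpha)$ for $\alpha$ and $t$ chosen in terms of $S$ and $\Theta$. Pulling back along the $G$-map $\varphi_t$ preserves the $\VCyc$-condition and does not increase the order, so $\calu$ has order at most $N$. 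For $S$-wideness on the $\Theta$-small part one checks that if $d^\pi_v(g,\xi) \leq \Theta$ for all vertices $v$, then for $t$ large relative to $S$ the points $\varphi_t(gs, \xi)$, $s \in S$, all lie on the single flow line $\CF_{gv_0,\xi}$ inside a segment of uniformly bounded length, hence in one $V \in \calv_\alpha$ once $\alpha$ is large enough; its preimage $U$ then satisfies $gS \x \{\xi\} \subseteq U$.

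The step I expect to be the main obstacle is the uniformity argument hidden in that last sentence: showing, \emph{uniformly over the $\Theta$-small part and $G$-equivariantly}, that the geodesics from $gv_0$ to $\xi$ relevant to $S$ stay inside an effectively locally finite region of $\Gamma$, so that $\varphi_t(gS,\xi)$ lands in a bounded sub-segment of one flow line. For hyperbolic groups this is where one uses local finiteness and cocompactness of $\Gamma$; here the substitute is the attraction property of projection distances --- if $d^\pi_v(g,\xi) \leq \Theta$ at every vertex $v$ of infinite valence, then near such a $v$ only the boundedly many edges within the $\Theta$-ball about $\pi_v(gv_0)$ in the proper metric on $E_v$ are relevant --- and the real work is turning this into genuinely uniform estimates that pin down how $\alpha$ and $t$ must grow with $\Theta$ and $S$. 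Everything else is the flow-space bookkeeping already carried out for hyperbolic groups.
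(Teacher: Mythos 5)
Your outline matches the route the paper itself indicates (and defers to~\cite{Bartels-coarse-flow} for): a coarse flow space of triples $(\xi_-,v,\xi_+)$ with long thin $\VCyc$-covers of uniformly bounded order pulled back to $G \x \Delta$ along the maps $\varphi_t$ of Remark~\ref{rem:coarse-flow-space}, with the bound $\Theta$ on projection distances (via the attraction property) replacing local finiteness of $\Gamma$ in the wideness/compactness step. You also correctly locate where the real work lies, so this is essentially the same approach, carried out in the reference rather than in this survey.
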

    
  To deal with large projection distances  an explicit construction can be used (similar to the case of $\GL_n(\IZ)$).
  For $(g,\xi) \in G \x \Delta$ let
  \begin{equation*}
  	V_\Theta(g,\xi) := \{ v \mid d_v^\pi(g,\xi)  \geq \Theta \}.
  \end{equation*}
  As a consequence of the attraction property, for sufficiently large $\Theta$, the set $V_\Theta(g,\xi)$ consists of vertices that belong to any geodesic from $gv_0$ to $\xi$.
  In particular, it can be linearly ordered by distance from $gv_0$.
  
  For a fixed vertex $v$ and $\Theta > 0$ define $W(v,\Theta) \subset G \x \Delta$ as the (interior of the) set of all pairs $(g,\xi)$ for which $v$ is minimal in $V_\Theta(g,\xi)$, i.e., $v$ is the vertex closest to $gv_0$ for which $d^\pi_v(g,\xi) \geq \Theta$.
  Then $\calw(\Theta) := \{ W(v,\Theta) \mid v \in V \}$ is a collection of pairwise disjoint open $\calp$-subsets of $G \x \Delta$.  
  
  \begin{proposition} \label{prop:cover-theta-large}
  	Let $S \subset G$ be finite.
  	Then there are $\theta'' \gg \theta' \gg \theta \gg 0$ such that $\calw(\theta) \cup \calw(\theta')$ is a $G$-invariant collection of open $\calp$-subsets of order $\leq 1$ that is $S$-long on  the $\theta''$-large part of $G \x \Delta$: if $d^\pi_v(g,\xi) \geq \theta''$ for some vertex $v$, then there is $W \in \calw(\theta) \cup \calw(\theta')$ such that $gS \x \{ \xi \} \subset W$.
  \end{proposition}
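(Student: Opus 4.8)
The plan is to split the statement into its formal content---that $\calw(\theta)\cup\calw(\theta')$ is a $G$-invariant collection of open $\calp$-subsets of order $\leq 1$---and its geometric content, the $S$-long property. The formal part requires no choice of constants: since the construction of $W(v,\Theta)$ is equivariant, $g\cdot W(v,\Theta)=W(gv,\Theta)$, so $\calw(\theta)$, $\calw(\theta')$ and their union are $G$-invariant; the discussion preceding the proposition already records that each $W(v,\Theta)$ is an open $\calp$-subset and that $\calw(\Theta)$ is pairwise disjoint; and because each of $\calw(\theta)$, $\calw(\theta')$ is pairwise disjoint, no point of $G\x\Delta$ lies in more than one member of each, so the union has order $\leq 1$.

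For the $S$-long property, I would first fix $S$ and a constant $C\geq 0$ bounding both (i) the change of $d^\pi_v(g,\xi)$ when $g$ is replaced by $gs$ with $s\in S$, uniformly in $v,\xi$ (this follows from the recorded bounded-variation property together with $d(gv_0,gsv_0)\leq\max_{s\in S}d(v_0,sv_0)$), and (ii) the change of $d^\pi_v(g,\xi)$ under a sufficiently small perturbation of $\xi$. From the attraction property I would extract a stability statement: for every threshold $\Theta\geq\Theta_0+\Theta'_0+2C$, each vertex of $V_\Theta(g,\xi)$ lies on every geodesic from $gv_0$ to $\xi$ and on every geodesic from $gsv_0$ to $\xi$ ($s\in S$); consequently all these geodesics induce a single linear order on $V_\Theta(g,\xi)\cup\bigcup_{s\in S}V_\Theta(gs,\xi)$, and $V_{\Theta+C}(g,\xi)\subseteq V_\Theta(gs,\xi)\subseteq V_{\Theta-C}(g,\xi)$. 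In particular the relation ``lying before'' along geodesics to $\xi$ becomes independent of the basepoint in $gS$.

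Next I would choose $\theta\gg\Theta_0+\Theta'_0+C$, then $\theta'$ with $\theta'-\theta\gg C$, then $\theta''$ with $\theta''-\theta'\gg C$ (the precise inequalities read off at the end). Given $(g,\xi)$ with $d^\pi_v(g,\xi)\geq\theta''$ for some $v$, the aim is to produce a threshold $\Theta\in\{\theta,\theta'\}$ and a vertex $w$ with $d^\pi_w(g,\xi)\geq\Theta+C$ such that every vertex lying before $w$ along geodesics from $gv_0$ to $\xi$ has $d^\pi(g,\xi)<\Theta-C$. Granting this, for each $s\in S$ one gets $d^\pi_w(gs,\xi)\geq\Theta$, so $w\in V_\Theta(gs,\xi)$; and any element of $V_\Theta(gs,\xi)$ strictly closer to $gsv_0$ than $w$ would, by the stability statement, lie before $w$ on geodesics to $\xi$ and satisfy $d^\pi(g,\xi)\geq\Theta-C$, a contradiction. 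Hence $w$ is the minimal element of $V_\Theta(gs,\xi)$ for all $s\in S$, and repeating the estimates with the remaining slack of $C$ together with (ii) shows $gS\x\{\xi\}$ lies in the \emph{interior} $W(w,\Theta)\in\calw(\theta)\cup\calw(\theta')$. Combined with Proposition~\ref{prop:cover-theta-small} applied with $\Theta=\theta''$, this yields Theorem~\ref{thm:Delta-fin-amenable}.

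The hard part will be the existence of the pair $(w,\Theta)$. A single threshold does not suffice: $d^\pi$ wobbles by up to $C$ as the basepoint ranges over $gS$, so a vertex whose value $d^\pi(g,\xi)$ happens to fall in the window $[\Theta-C,\Theta+C]$ can move into or out of $V_\Theta(gs,\xi)$ and destroy minimality. Having two thresholds far apart is exactly what supplies the room: I would argue that the profile $v\mapsto d^\pi_v(g,\xi)$ along a geodesic from $gv_0$ to $\xi$---which must climb from small values past $\theta''$---cannot ``creep'' (increase by less than $2C$ at a step) through both the $\theta$-window and the $\theta'$-window before it reaches a vertex of value $\geq\theta''$, so that at one of the two scales it ``jumps over'' the window, giving the required $w$. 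Here one uses the structure of the fine hyperbolic graph $\Gamma$ near its infinite-valence (peripheral) vertices, where the large values of $d^\pi$ arise, and that $\Gamma$ is fine and $G$-cocompact, to bound the relevant constants. Making this ``creep-or-jump'' alternative into a clean dichotomy, and pinning down the inequalities among $\theta$, $\theta'$, $\theta''$, $C$ and the graph constants, is the technical core of the argument; the rest is bookkeeping with the attraction property.
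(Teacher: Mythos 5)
Your handling of the formal part is fine ($G$-invariance, openness, and order $\leq 1$ from pairwise disjointness of each of $\calw(\theta)$, $\calw(\theta')$), and your reduction of $S$-longness to finding a vertex $w$ and a scale $\Theta\in\{\theta,\theta'\}$ with $d^\pi_w(g,\xi)\geq\Theta+C$ while all earlier vertices have $d^\pi(g,\xi)<\Theta-C$ is a reasonable sufficient criterion. But the step you yourself identify as the technical core --- the ``creep-or-jump'' alternative --- is not proved, and as stated it is false. The numbers $d^\pi_v(g,\xi)$ at the successive vertices of $V_{\Theta_0}(g,\xi)$ are penetration depths into \emph{different} peripheral cosets; they are not values of a function that climbs in bounded steps, and neither fineness nor cocompactness constrains their distribution (fineness only controls the constant $C$ by which $d^\pi_v$ varies with the basepoint). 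Nothing prevents a configuration in which, before the $\theta''$-deep vertex, one vertex $u_1$ has $d^\pi_{u_1}(g,\xi)\approx\theta$ and a later vertex $u_2$ has $d^\pi_{u_2}(g,\xi)\approx\theta'$. In that situation, as $s$ ranges over $S$, the minimal vertex of $V_\theta(gs,\xi)$ oscillates between $u_1$ and $u_2$, and the minimal vertex of $V_{\theta'}(gs,\xi)$ oscillates between $u_2$ and the deep vertex; so \emph{no single} vertex is stably minimal at \emph{either} scale, and your strategy of ``try $\theta$, else $\theta'$ jumps over the window'' cannot produce the required $(w,\Theta)$.

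This also shows that the two scales cannot be used as two independent attempts, which is how your outline uses them. The mechanism indicated in the paper (and carried out in the reference for this proposition) is different: the attraction property is used to show that \emph{segments} of the linearly ordered set $V_\theta(g,\xi)$ lying between two vertices whose projection distance exceeds the much larger scale $\theta'$ are stable under the relevant variations of $(g,\xi)$; it is this interaction between the small and the large scale --- stabilizing pieces of the $\theta$-order by $\theta'$-deep vertices --- rather than a one-vertex minimality argument at one of two scales, that makes the cover $\calw(\theta)\cup\calw(\theta')$ $S$-long on the $\theta''$-large part. A smaller issue of the same flavor: your constant in (ii) (variation of $d^\pi_v$ under perturbation of $\xi$) is only available for fixed $v$ and on a neighborhood depending on $(v,g,\xi)$, which is exactly why the sets $W(v,\Theta)$ are taken to be interiors and why the paper flags the instability of minimality; a uniform such $C$ cannot simply be fixed at the outset. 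So the proposal needs a genuinely new idea at its central step before it can be considered a proof.
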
 

  A difficulty in working with the  $\calw(v,\Theta)$ is that it is for fixed $\Theta$ not possible to control exactly how $V_\Theta(g,\xi)$ varies with $g$ and $\xi$.
  In particular whether or not a vertex $v$ is minimal in $V_\Theta(g,\xi)$ can change under small variation in $g$ or $\xi$.
  A consequence of the attraction property that is useful for the proof of Proposition~\ref{prop:cover-theta-large} is the following: suppose there are vertices $v_0$ and $v_1$ with $d^\pi_{v_i}(g,\xi) > \Theta' \gg \Theta$, then the segment between $v_0$ and $v_1$ in the linear order of $V_\Theta(g,\xi)$ is unchanged under suitable variations of $(g,\xi)$ depending on $\Theta'$.    
  
  \begin{remark}
  	A motivating example of relatively hyperbolic groups are fundamental groups $G$ of complete Riemannian manifolds $M$ of pinched negative sectional curvature and finite volume.
  	These are hyperbolic relative to their virtually finitely generated nilpotent subgroups~\cite{Bowditch-rel-hyperbolic, Farb-rel-hyp}.
  	In this case we can work with the sphere at $\infty$ of the universal cover $\tilde M$ of $M$.
 	The splitting of $G \x S_\infty$ into a $\Theta$-small part and a $\Theta$-large part can be thought of as follows.
  	Fix a base point $x_0 \in \tilde M$.
  	Instead of a number $\Theta$ we choose a cocompact subset $G\cdot K$ of $\tilde M$.
  	The small part of $G \x \Delta$ consists then of all pairs $(g,\xi)$ for which the geodesic ray from $gx_0$ to $\xi$ is contained in $X$; the large part is the complement.
  	Under this translation the cover from Proposition~\ref{prop:cover-theta-large} can again be thought of as a cover at $\infty$ for $\tilde M$.
  	Moreover, the vertices of infinite valency in $\Gamma$ correspond to horoballs in $\tilde M$, and  projection distances to time geodesic rays spend in horoballs.
  	
    Note that the action of $G$ on the graph $\Gamma$ in the definition of relative hyperbolicity we used is cocompact, but $\Gamma$ is not a proper metric space.
    Conversely, in the above example the action of $G$ on $\tilde M$ is no longer cocompact, but now $\tilde M$ is a proper metric space.
    A similar trade off (cocompact action on non proper space versus non-cocompact action on proper space) is possible for all relatively hyperbolic groups~\cite{Groves-Manning-Dehn-filling}, assuming the parabolic subgroups are finitely generated.
  \end{remark}

  \subsection*{The mapping class group}  
  Let $\Sigma$ be a closed orientable surface of genus $g$ with a finite set $P$ of $p$ marked points.
  We will assume $6g + 2p - 6 > 0$.
  The mapping class group $\Mod(\Sigma)$ of $\Sigma$ is the group of components of the group of orientation preserving homeomorphisms of $\Sigma$ that leave $P$ invariant. 
  Teichm\"uller space $\calt$ is the space of marked complete hyperbolic structures of finite area on $\Sigma \setminus P$. 
  The mapping class group acts on Teichm\"uller space by changing the marking.  
  Thurston~\cite{Fathi-Laudenbach-Poenaru-surfaces-Thurston-english} defined an equivariant compactification of Teichm\"uller space $\overline \calt$.
  As a space $\overline \calt$ is a closed disk, in particular it is an Euclidean retract.
  The boundary of the compactification $\PMF := \overline{\calt} \setminus \calt$ is the space of projective measured foliations on $\Sigma$.
  The key step in the proof of the Farrell-Jones Conjecture for $\Mod(\Sigma)$ is the following.
  
  \begin{theorem}[\cite{Bartels-Bestvina-FJC-MCG}] \label{thm:PMF-amenable}
  	Let $\calf$ be the family of subgroups of $\Mod(\Sigma)$ that virtually fix a point in $\PMF$.
  	The action of $\Mod(\Sigma)$ on $\PMF$ is finitely $\calf$-amenable.
  \end{theorem}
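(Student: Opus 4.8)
The plan is to run the scheme of Theorem~\ref{thm:Delta-fin-amenable} and Propositions~\ref{prop:cover-theta-small} and~\ref{prop:cover-theta-large}, with the curve complex $\calc = \calc(\Sigma)$ in the role of Bowditch's fine hyperbolic graph and the Masur--Minsky subsurface projections in the role of the projections $\pi_v$. By the characterization of $N$-$\calf$-amenability in Remark~\ref{rem:covers-not-maps}, it suffices to produce, for every finite $S \subseteq \Mod(\Sigma)$, an $S$-wide $\calf$-cover of $\Mod(\Sigma) \x \PMF$ of order bounded independently of $S$. Fix a base marking $\mu_0$ on $\Sigma$. For an essential subsurface $W \subsetneq \Sigma$, $g \in \Mod(\Sigma)$ and $\xi \in \PMF$ set
\begin{equation*}
	d^\pi_W(g,\xi) := d_W\bigl(\pi_W(g\mu_0),\pi_W(\xi)\bigr),
\end{equation*}
where $d_W$ is distance in the curve complex $\calc(W)$ and $\pi_W$ is the subsurface projection, with $d^\pi_W(g,\xi) := \infty$ when $\xi$ fills $W$ (the analogue of $d^\pi_v(g,v) = \infty$). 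The facts from the Masur--Minsky hierarchy machinery that replace fineness and the attraction property are: $\calc(\Sigma)$ is Gromov hyperbolic; $d^\pi_W(g,\xi)$ varies by a bounded amount as $g$ ranges over a finite set and $\xi$ over a small open set; the Behrstock inequality, giving a partial order $\sqsubset$ on the subsurfaces with a large projection; the bounded geodesic image theorem, giving that if $d^\pi_W(g,\xi)$ is large then any geodesic in $\calc(\Sigma)$ from $\pi(g\mu_0)$ to $\xi$ runs close to $\dd W$; and the finiteness statement that for fixed $\Theta$ only finitely many $W$ satisfy $d^\pi_W(g,\xi) \ge \Theta$.

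Following the relatively hyperbolic case, one splits $\Mod(\Sigma) \x \PMF$ into the \emph{$\Theta$-small part} --- pairs $(g,\xi)$ with $d^\pi_W(g,\xi) \le \Theta$ for all proper essential $W$ --- and its complement, the \emph{$\Theta$-large part}. On the $\Theta$-small part I would argue as for hyperbolic groups in Remark~\ref{rem:coarse-flow-space}: build a coarse flow space for the hyperbolic complex $\calc(\Sigma)$ from triples $(\xi_-,v,\xi_+)$ with $v$ a curve lying coarsely on a geodesic from $\xi_-$ to $\xi_+$, equip it with long thin $\VCyc$-covers of bounded order, and pull these back along a map $\Mod(\Sigma)\x\PMF \to \CF$ of the type in Remark~\ref{rem:(A)-for-negative-curvature}; this yields $\VCyc$-covers that are $S$-wide on the $\Theta$-small part. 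I expect this to be the main obstacle. The curve complex is badly non-locally-finite and not cocompact modulo $\Mod(\Sigma)$, so the argument of Remark~\ref{rem:coarse-flow-space} does not apply directly; restricting to the $\Theta$-small part is the mapping-class-group substitute for Bowditch's fineness hypothesis, and the real work is to show that bounding every subsurface projection by $\Theta$ confines the relevant geodesics in $\calc(\Sigma)$ to a region that behaves like the locally finite, cocompact situation for hyperbolic groups. (A secondary technical point, already visible in the convention $d^\pi_W = \infty$ above, is to make the set of subsurfaces with large projection well behaved for all $\xi \in \PMF$, including non-uniquely-ergodic foliations.)

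For the $\Theta$-large part one uses an explicit construction as in Proposition~\ref{prop:cover-theta-large}. Put $V_\Theta(g,\xi) := \{\, W \mid d^\pi_W(g,\xi) \ge \Theta \,\}$. For $\Theta$ large the Behrstock order restricts to a total order on $V_\Theta(g,\xi)$ with a well-defined $\sqsubset$-minimal element; after passing to an auxiliary $\Theta' \gg \Theta$ as in the remark following Proposition~\ref{prop:cover-theta-large}, this minimal element is unchanged under suitable small perturbations of $(g,\xi)$. For each proper essential subsurface $W$ let $\calw(W,\Theta)$ be the interior of the set of $(g,\xi)$ for which $W$ is $\sqsubset$-minimal in $V_\Theta(g,\xi)$, and put $\calw(\Theta) := \{\, \calw(W,\Theta) \mid W \,\}$. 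Then $\calw(\Theta)$ is a $\Mod(\Sigma)$-invariant collection of pairwise disjoint open sets, and each $\calw(W,\Theta)$ is an $\calf$-subset: its setwise stabilizer is contained in the stabilizer of $W$ in $\Mod(\Sigma)$, which permutes the components of the multicurve $\dd W$, hence fixes the point of $\PMF$ it determines, hence lies in $\calf$. (This is where the particular family $\calf$ of Theorem~\ref{thm:PMF-amenable} is used.) As in Proposition~\ref{prop:cover-theta-large} one then finds $\theta'' \gg \theta' \gg \theta \gg 0$ such that $\calw(\theta) \cup \calw(\theta')$ has order $\le 1$ and is $S$-long on the $\theta''$-large part.

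Finally, take $\Theta = \theta''$ and combine the $\VCyc$-cover of the $\theta''$-small part with $\calw(\theta) \cup \calw(\theta')$, thickening the former slightly so that the union is an open cover and bounding the resulting order; this produces an $S$-wide $\calf$-cover of $\Mod(\Sigma)\x\PMF$ of uniformly bounded order. By Remark~\ref{rem:covers-not-maps} the action of $\Mod(\Sigma)$ on $\PMF$ is then finitely $\calf$-amenable. (Combined with the finite $\calf$-amenability of the action on Teichm\"uller space, coming from Harer's cocompact spine, this gives finite $\calf$-amenability of the action on the disk $\overline{\calt}$, so Theorem~\ref{thm:FJ-N-F-actions} yields the Farrell-Jones Conjecture for $\Mod(\Sigma)$ relative to $\calf$, and then relative to $\VCyc$ by the transitivity principle~\ref{thm:transitivity-principle} and an induction on complexity using the inheritance properties.)
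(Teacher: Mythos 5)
Your skeleton (split $\Mod(\Sigma)\x\PMF$ into a part where all subsurface projections are bounded and a part where some projection is large, cover the large part by an explicit projection construction as in Proposition~\ref{prop:cover-theta-large}, cover the other part by coarse-flow-space methods, then combine) is indeed the strategy of~\cite{Bartels-Bestvina-FJC-MCG}, but both halves of your argument have genuine gaps. For the large-projection part, your claim that for large $\Theta$ the Behrstock order is a \emph{total} order on $V_\Theta(g,\xi)$ with a well-defined minimal element is false: the Behrstock inequality only compares \emph{overlapping} subsurfaces, and disjoint or nested subsurfaces with large projections are incomparable (e.g.\ $\xi$ may be supported on two disjoint subsurfaces, both with huge projection distance), so ``the $\sqsubset$-minimal element'' need not exist and your single collection $\calw(\Theta)$ of order $\leq 1$ does not. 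This is precisely why the proof runs through the machinery of Bestvina--Bromberg--Fujiwara~\cite{Bestvina-Bromberg-Fujiwara-actions-on-quasi-trees-MCG}: the subsurfaces are grouped into finitely many families $\bfY$ of pairwise overlapping subsurfaces, the projection distances are perturbed so that the attraction property holds along geodesics of the associated projection complex, and the analogue of Proposition~\ref{prop:cover-theta-large} is carried out \emph{per family} --- which is why Proposition~\ref{prop:cover-large-proj} is stated for one family at a time and the order of the thin-part cover is controlled by the number of families, not by $1$.

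For the other part, which you yourself flag as the main obstacle, the missing content is not that bounded projections ``confine geodesics in $\calc(\Sigma)$'': building the coarse flow space and the maps $\varphi_t$ directly from $\calc(\Sigma)$ and its Gromov boundary does not work, since Klarreich's identification only puts minimal filling foliations on $\dd\calc(\Sigma)$, the extension $\PMF \to \calc(\Sigma)\cup\dd\calc(\Sigma)$ is merely coarse (and continuous only over the boundary), and for non-filling $\xi$ there is no sensible geodesic in $\calc(\Sigma)$ from $gv_0$ ``towards $\xi$'' along which to flow (also, $\calc(\Sigma)$ is in fact cocompact for $\Mod(\Sigma)$; the failure is local finiteness). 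In~\cite{Bartels-Bestvina-FJC-MCG} the relevant part is defined via Teichm\"uller geometry --- the ray $c_{g,\xi}$ staying in $\calt_{\geq\e}$, a cocompact locus by~\cite{Mumford-Remark-Mahler-compactness} --- and the hyperbolic-like input is Minsky's contraction property for thick geodesics~\cite{Minsky-Quasi-Proj-Teichmueller}, the Masur criterion $c_{g,\xi}(t)\to\xi$~\cite{Masur-Hausdorff-dim-nonergodic-foliations}, injectivity of Klarreich's map on such $\xi$, and the fact that thick rays project to uniform quasi-geodesics in $\calc(\Sigma)$; the interface with the thin part is Rafi's theorem~\cite{Rafi-short-curves-Teichmueller-geodesic} that a short curve along $c_{g,\xi}$ forces a large subsurface projection. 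Your purely combinatorial small/large decomposition would in addition need the converse bridge (uniformly bounded projections force $c_{g,\xi}$ into some $\calt_{\geq\e}$), which you do not supply. So the outline is right, but the two substantive inputs --- the per-family projection-complex construction and the Teichm\"uller-geometric treatment of the thick part --- are exactly what is still missing.
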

  
  From this it follows quickly that the action on $\overline{\calt}$ is finitely $\calf$-amenable as well, and applying Theorem~\ref{thm:FJ-N-F-actions} we obtain the Farrell-Jones Conjecture for $\Mod(\Sigma)$ relative to $\calf$.
  Up to passing to finite index subgroups, the groups in $\calf$ are central extensions of products of mapping class groups of smaller complexity.
  Using the transitivity principle and inheritance properties one then obtains the Farrell-Jones Conjecture for $\Mod(\Sigma)$ by induction on the complexity of $\Sigma$.
  The only additional input in this case is that the Farrell-Jones Conjecture holds for finitely generated free abelian groups.
     
  The proof of Theorem~\ref{thm:PMF-amenable} uses the characterization of $N$-$\calf$-amenability from Remark~\ref{rem:covers-not-maps} and provides suitable covers for $\Mod(\Sigma) \x \PMF$.
  Similar to the relative hyperbolic case the construction of these covers is done by splitting $\Mod(\Sigma) \x \PMF$ into two parts.
  Here it is natural to refer to these parts as the thick part and the thin part.
  (The thick part corresponds to the $\Theta$-small part in the relative hyperbolic case.) 
  
  Teichm\"uller space has a natural filtration by cocompact subsets. 
  For $\e > 0$ the $\e$-thick part $\calt_{\geq \e} \subseteq \calt$ consist of all marked hyperbolic structures such that all closed geodesics have length $\geq \e$. 
  The action of $\Mod(\Sigma)$ on $\calt_{\geq \e}$ is cocompact~\cite{Mumford-Remark-Mahler-compactness}.
  Fix a base point $x_0 \in \calt$.
  Given a pair $(g,\xi) \in \Mod(\Sigma) \x \PMF$ there is a unique Teichm\"uller ray $c_{g,ξ}$ that starts at $g(x_0)$ and is ``pointing towards $\xi$'' (technically, the vertical foliation of the quadratic differential is $\xi$).
  The $\e$-thick part of $\Mod(\Sigma) \x \PMF$ is defined as the set of all pairs $(g,\xi)$ for which the Teichm\"uller ray $c_{g,\xi}$ stays in $\calt_{\geq \e}$. 
  
  An important tool in covering both the thick and the thin part is the complex of curves $\calc(\Sigma)$.
  A celebrated result of Mazur-Minsky is that $\calc(\Sigma)$ is hyperbolic~\cite{Masur-Minsky-Curve-Complex-Hyperbolicity}.
  Klarreich~\cite{Klarreich} studied a coarse projection map $\pi \colon \calt \to \calc(\Sigma)$ and identified the Gromov boundary $\dd \calc(\Sigma)$ of the curve complex.
  In particular, the projection map has an extension $\pi \colon \PMF \to \calc(\Sigma) \cup \dd \calc(\Sigma)$.
  (On the preimage of the Gromov boundary this extension is a continuous map; on the complement it is still only a coarse map.)
   
  Teichm\"uller space is not hyperbolic, but its thick part $\calt_{\geq \e}$ has a number of hyperbolic properties:
  The Masur criterion~\cite{Masur-Hausdorff-dim-nonergodic-foliations} implies that for $(g,\xi)$ in the thick part, $c_{g,\xi}(t) \to \xi$ as $t \to \infty$.
  Moreover, the restriction of Klarreichs projection map $\pi \colon \PMF \to \calc(\Sigma) \cup \dd \calc(\Sigma)$ to the space of all such $\xi$ is injective. 
  A result of Minsky~\cite{Minsky-Quasi-Proj-Teichmueller} is that geodesics $c$ that stay in $\calt_{\geq \e}$ are contracting. 
  This is a property they share with geodesics in hyperbolic spaces: the nearest point projection $\calt \to c$ maps balls disjoint from $c$ to uniformly bounded subsets.
  Teichm\"uller geodesics in the thick part $\calt_{\geq \e}$ project to quasi-geodesics in the curve complex with constants depending only on $\e$. 
  All these properties eventually allow for the construction of suitable covers of any thick part using a coarse flow space and methods from the hyperbolic case.
  A precise statement is the following.
  
  \begin{proposition}
  	\label{prop:cover-thick}
  	There is $d$ such that for any $\e > 0$ and any $S \subset \Mod(\Sigma)$ finite there exists a $\Mod(\Sigma)$-invariant collection $\calu$ of $\calf$-subsets of $\Mod(\Sigma) \x \PMF$ of order $\leq d$ such for any $(g,\xi)$ for which $c_{g,\xi}$ stays in $\calt_{\geq \e}$ there is $U \in \calu$ with $gS \x \{ \xi \} \subseteq U$. 
  \end{proposition}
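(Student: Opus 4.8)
The plan is to imitate the construction of $S$-wide covers for hyperbolic groups sketched in Remark~\ref{rem:coarse-flow-space}, with the curve complex $\calc(\Sigma)$ in the role of the Cayley graph and Klarreich's projection $\pi$ in the role of the boundary identification. Since $\calc(\Sigma)$ is $\delta$-hyperbolic by Masur--Minsky but, unlike a Cayley graph, is not locally finite, the situation is really closer to the relatively hyperbolic case (Proposition~\ref{prop:cover-theta-small}): the $\e$-thick part of $\Mod(\Sigma)\x\PMF$ will play the role there played by the $\Theta$-small part, namely the part on which the failure of local finiteness can be tamed.

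Fixing $\e>0$ and a base point $x_0\in\calt_{\geq\e}$, put $v_0:=\pi(x_0)$. For $(g,\xi)$ in the $\e$-thick part the Masur criterion gives $c_{g,\xi}(t)\to\xi$, Klarreich's theorem then places $\pi(\xi)$ in $\dd\calc(\Sigma)$ and shows that $\pi$ is injective on the set of such $\xi$, and Minsky's contraction property makes $\pi\circ c_{g,\xi}$ a parametrised quasi-geodesic in $\calc(\Sigma)$ with constants depending only on $\e$. Using these facts together with the cocompactness of the action on $\calt_{\geq\e}$ (Mumford), I would set up a coarse flow space $\CF$ modelled on $\calc(\Sigma)$ — triples $(\eta_-,w,\eta_+)$ with $w$ coarsely on a $\calc(\Sigma)$-geodesic from $\eta_-$ to $\eta_+$, as in Remark~\ref{rem:coarse-flow-space} — together with maps $\varphi_t\colon\Mod(\Sigma)\x\PMF\to\CF$, $\varphi_t(g,\xi):=(gv_0,w,\pi(\xi))$ with $w$ at distance $t$ from $gv_0$ on a geodesic towards $\pi(\xi)$; on the $\e$-thick part these are well defined, with $w$ only coarsely so, which as in Remark~\ref{rem:coarse-flow-space} is harmless for pulling covers back. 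The crucial point — this is where the hyperbolic features listed above do their work — is that after pulling long thin covers of $\CF$ back along $\varphi_t$ one obtains, for suitable parameters, a $\Mod(\Sigma)$-invariant collection of open $\calf$-subsets of $\Mod(\Sigma)\x\PMF$ of dimension $\leq d$, with $d$ depending only on $\Sigma$; this is the constant of the statement. The members can be arranged to be $\calf$-subsets because the stabiliser of such a member fixes the $\dd\calc(\Sigma)$-endpoint $\pi(\xi)$ of the coarse geodesic it surrounds, which by injectivity of $\pi$ on uniquely ergodic foliations is the stabiliser of $\xi\in\PMF$, hence lies in $\calf$.

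It remains to choose the parameters so that the collection is $S$-wide on the $\e$-thick part. Given $S$ finite, replacing $g$ by $gs$ moves $gv_0$ in $\calc(\Sigma)$ by at most $D:=\max_{s\in S}d_{\calc(\Sigma)}(v_0,sv_0)$, and by $\delta$-hyperbolicity the geodesics from $gv_0$ and from $gsv_0$ to the common endpoint $\pi(\xi)$ fellow-travel, with constants depending only on $D$ and $\delta$, once they have merged. Hence for $t$ large and for an $\alpha$-long, $\rho$-thick cover of $\CF$ of dimension $\leq d$ whose parameters $\alpha,\rho$ exceed a bound depending only on $D$ and $\delta$, all the points $\varphi_t(gs,\xi)$, $s\in S$, lie in a common member of that cover, so $gS\x\{\xi\}$ lies in the corresponding pulled-back member — which is the required $S$-wideness. (The $\e$-thickness of $(g,\xi)$ enters only to guarantee that $\pi(\xi)\in\dd\calc(\Sigma)$, so that the points $\varphi_t(gs,\xi)$ are defined; for non-thick pairs the collection simply covers whatever it covers, the thin part being handled separately.)

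The hard part will be the bounded-dimension assertion in the presence of the non-local-finiteness of $\calc(\Sigma)$: $\alpha$-long, $\rho$-thick covers of uniformly bounded dimension cannot exist over all of $\CF$, and the whole point of restricting to the $\e$-thick part is that there the rays $\pi\circ c_{g,\xi}$ are uniform quasi-geodesics and $\Mod(\Sigma)$ acts cocompactly on $\calt_{\geq\e}$, so that a compactness argument run in the \emph{proper} space $\calt_{\geq\e}$ produces a uniform bound on the dimension and on the parameters needed for $S$-wideness. Making this precise — in particular controlling how the coarsely defined vertex $\varphi_t(g,\xi)$ and the coarse flow line through it vary as $(g,\xi)$ ranges over the whole $\e$-thick part — is the technical heart of the proof.
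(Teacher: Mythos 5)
Your strategy coincides with the approach the paper itself sketches: it assembles exactly the same ingredients---hyperbolicity of $\calc(\Sigma)$, Klarreich's identification of $\dd\calc(\Sigma)$ and the injectivity of $\pi$ on thick limit points, the Masur criterion, Minsky's contraction property giving uniform quasi-geodesic projections with constants depending only on $\e$, and cocompactness of the action on $\calt_{\geq\e}$---and then runs the coarse flow space machinery of Remark~\ref{rem:coarse-flow-space} to pull back long thin covers, which is precisely how the paper describes the construction before deferring the details to~\cite{Bartels-Bestvina-FJC-MCG}. The steps you flag as the technical heart (the uniform dimension bound despite non-local-finiteness, and controlling the coarsely defined flow lines as $(g,\xi)$ varies over the thick part) are exactly what is carried out in that reference, so your outline follows the same route rather than a different one.
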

  
  The action of the mapping class group on the curve complex $\calc(\Sigma)$ does not exhibit the mapping class group as a relative hyperbolic group in the sense discussed before; the $1$-skeleton of $\calc(\Sigma)$ is not fine.
  Nevertheless, there is an important replacement for the projections to links used in the relatively hyperbolic case, the subsurface projections of Masur-Minsky~\cite{Masur-Minsky-Curve-Complex-Hierarchie}.
  In this case the projections are not to links in the curve complex, but to curve complexes $\calc(Y)$ of subsurfaces $Y$ of $\Sigma$.
  (On the other hand, often links in the curve complex are exactly curve complexes of subsurfaces.)
  The theory is however much more sophisticated than in the relatively hyperbolic case.
  Projections are not always defined; sometimes the projection is to points in the boundary of $\calc(Y)$ and the projection distance is $\infty$.
  Bestvina-Bromberg-Fujiwara~\cite{Bestvina-Bromberg-Fujiwara-actions-on-quasi-trees-MCG} used subsurface projections to prove that the mapping class group has finite asymptotic dimension.
  In their work the subsurfaces of $\Sigma$ are organized in a finite number $N$ of families $\bfY$ such that two subsurfaces in the same family will always intersect in an interesting way.
  This has the effect that the projections for subsurfaces in the same family interact in a controlled way with each other.
  Each family $\bfY$ of subsurfaces is organized in~\cite{Bestvina-Bromberg-Fujiwara-actions-on-quasi-trees-MCG} in an associated simplicial complex, called the projection complex. 
  The vertices of the projection complex are the subsurfaces from $\bfY$.   
  A perturbation of the projection distances can thought of as being measured along geodesics in the projection complex and now behaves very similar as in the relative hyperbolic case, in particular the attraction property is satisfied in each projection complex.
  This allows the application of a variant of the construction from Proposition~\ref{prop:cover-theta-large} for each projection complex that eventually yield the following.
  
  \begin{proposition}
  	\label{prop:cover-large-proj}
  	Let $\bfY$ be any of the finitely many families of subsurfaces.
  	For any $S \subseteq G$ finite there exists $\Theta > 0$ and a $\Mod(\Sigma)$-invariant collection $\calu$ of $\calf$-subsets of $\Mod(\Sigma) \x \PMF$ of order $\leq 1$ such for any $(g,\xi)$ for which there is $Y \in \bfY$ with $d^\pi_Y(g,\xi) \geq \Theta$ there is $U \in \calu$ with $gS \x \{ \xi \} \subseteq U$. 
  \end{proposition}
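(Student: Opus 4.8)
The plan is to transport the construction of Proposition~\ref{prop:cover-theta-large} from the fine hyperbolic graph to the projection complex of the family $\bfY$: the vertices of infinite valency are replaced by the subsurfaces $Y \in \bfY$, the projections to links by the Masur--Minsky subsurface projections $\pi_Y$, and the geodesics in $\Gamma$ by geodesics in the projection complex. First I would fix a base marking $v_0$ so that $\pi_Y(gv_0)$ and $\pi_Y(\xi)$ are coarsely defined for $Y \in \bfY$, and let $d^\pi_Y(g,\xi)$ denote the perturbed projection distance of Bestvina--Bromberg--Fujiwara, measured along geodesics of the projection complex (with the convention that it is $\infty$ when $\pi_Y(\xi)$ lands in $\dd\calc(Y)$ and $0$ when undefined). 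The first substantive step is the attraction property in each projection complex, already advertised in the text: there is $\Theta_0$ such that $d^\pi_Y(g,\xi) \geq \Theta_0$ forces every geodesic of the projection complex ``from $gv_0$ to $\xi$'' to pass through the vertex $Y$, while a geodesic missing $Y$ keeps $d^\pi_Y(g,\xi)$ below some uniform $\Theta_0'$. From this I get that for $\Theta \geq \Theta_0$ the set $V_\Theta(g,\xi) := \{ Y \in \bfY \mid d^\pi_Y(g,\xi) \geq \Theta \}$ consists of subsurfaces lying between $gv_0$ and $\xi$ in the projection complex, hence is linearly ordered by distance from $gv_0$, exactly as in the relatively hyperbolic case.

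Next I would mimic the relatively hyperbolic definition: for $Y \in \bfY$ and $\Theta \geq \Theta_0$ let $W(Y,\Theta) \subseteq \Mod(\Sigma) \x \PMF$ be the interior of the set of pairs $(g,\xi)$ for which $Y$ is the minimal element of $V_\Theta(g,\xi)$, and set $\calw(\Theta) := \{ W(Y,\Theta) \mid Y \in \bfY \}$. Then I would record three routine facts. (i) Since $\bfY$ and the subsurface projections are $\Mod(\Sigma)$-equivariant, $\calw(\Theta)$ is $\Mod(\Sigma)$-invariant with $gW(Y,\Theta) = W(gY,\Theta)$. (ii) The stabilizer $\Mod(\Sigma)_Y$ virtually fixes the projective multicurve $[\dd Y] \in \PMF$, hence belongs to $\calf$; as the $W(Y',\Theta)$ are pairwise disjoint, $W(Y,\Theta)$ is an $\calf$-subset with respect to $\Mod(\Sigma)_Y$. (iii) A union $\calw(\theta) \cup \calw(\theta')$ of two such pairwise disjoint collections has order $\leq 1$.

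The technical heart, and the step I expect to be the main obstacle, is choosing $\theta'' \gg \theta' \gg \theta \gg \Theta_0$ so that $\calw(\theta) \cup \calw(\theta')$ is $S$-long on the $\theta''$-large part. I would first fix a uniform bound $C = C(S)$ on the variation of all projection distances under $g \mapsto gs$, $s \in S$ (finite $S$, $\xi$ held fixed). Given $(g,\xi)$ with $d^\pi_{Y'}(g,\xi) \geq \theta''$ for some $Y'$, I would run the dichotomy of Proposition~\ref{prop:cover-theta-large}: if the minimal element $Y_0$ of $V_{\theta'}(g,\xi)$ stays minimal in $V_\theta(gs,\xi)$ for all $s \in S$, then $gS \x \{\xi\} \subseteq W(Y_0,\theta)$; otherwise a borderline subsurface has intruded closer to $gv_0$, and then the ``segment unchanged'' consequence of the attraction property --- applied to $Y_0$ together with $Y'$, whose distances are separated by the large gap $\theta'' - \theta'$ --- shows that the minimal element of $V_{\theta'}(gs,\xi)$ is a single subsurface $Y_1$ for all $s \in S$ with $gS \x \{\xi\} \subseteq W(Y_1,\theta')$. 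The delicate part is balancing the three scales (and the Bestvina--Bromberg--Fujiwara constants) against $C$ so that exactly one of these two alternatives always applies to the whole of $gS$ at once; this is where the instability of ``being minimal in $V_\Theta$'' under perturbation of $(g,\xi)$ is absorbed, just as in~\cite{Bartels-coarse-flow}, the only difference being that all estimates now take place inside the finitely many projection complexes provided by~\cite{Bestvina-Bromberg-Fujiwara-actions-on-quasi-trees-MCG}. With the scales fixed, $\calu := \calw(\theta) \cup \calw(\theta')$ and $\Theta := \theta''$ are as required.
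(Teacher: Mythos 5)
Your proposal follows essentially the same route the paper sketches: replace the vertices of infinite valency by the subsurfaces of a Bestvina--Bromberg--Fujiwara family $\bfY$, use the perturbed projection distances measured in the associated projection complex (where the attraction property holds), and run the $\calw(\theta)\cup\calw(\theta')$ construction of Proposition~\ref{prop:cover-theta-large} with three separated scales. This matches the argument outlined in the text (with full details in~\cite{Bartels-Bestvina-FJC-MCG}), so there is nothing substantive to correct.
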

  
  The final piece, needed to combine Propositions~\ref{prop:cover-thick} and~\ref{prop:cover-large-proj} to a proof of Theorem~\ref{thm:PMF-amenable}, is a consequence of Rafi's analysis of short curves in $\Sigma$ along Teichm\"uller rays~\cite{Rafi-short-curves-Teichmueller-geodesic}: for any $\e > 0$ there is $\Theta$ such that if some curve of $\Sigma$ is $\e$-short on $c_{g,\xi}$ (i.e., if $c_{g,\xi}$ is not contained in $\calt_{\geq \e}$) then there is a subsurface $Y$ such that $d_Y^\pi(g,\xi) \geq \Theta$.

  \begin{remark}
    Farrell-Jones~\cite{Farrell-Jones-Rigidity-GL_m} proved topological rigidity results for fundamental groups of non-positively curved manifolds that are in addition $A$-regular.
    The latter condition bounds the curvature tensor and its covariant derivatives over the manifold.
    All torsion free discrete subgroups of $\GL_n(\IR)$ are fundamental groups of such manifolds.
    Similar to the examples discussed in this section a key difficulty in~\cite{Farrell-Jones-Rigidity-GL_m} is that the action of the fundamental group $G$ of the manifold on the universal cover is not cocompact.
    The general strategy employed by Farrell-Jones seems however different, in particular, it does not involve an induction over some kind of complexity of $G$.
    The only groups that are considered in an intermediate step are polycyclic groups, and the argument directly reduces from $G$ to these and then uses computations of $K$-and $L$-theory for polycyclic groups.
    
    This raises the following question: Can the family of subgroups in Theorem~\ref{thm:GL-fin-amenable} be replaced with the family of virtually polycyclic subgroups? 
    Recall that the cover constructed for the flow space at $\infty$ is both $\alpha$-long and $\alpha$-thick, while only $\delta$-thickness is needed. 
    So it is plausible that there exist thinner covers at $\infty$ that work for the family of virtually polycyclic subgroups.
     
    For the mapping class group the family from Theorem~\ref{thm:PMF-amenable} can not be chosen to be significantly smaller; all isotropy groups for the action have to appear in the family. 
    But one might ask, whether there exist $N$-$\calf$-amenable actions (or $N$-$\calf$-amenable sequences of homotopy coherent actions) of mapping class groups on Euclidean retracts, where $\calf$ is smaller than the family used in Theorem~\ref{thm:PMF-amenable}.        	
  \end{remark}

\ignore{  ======================PAPIERKORB=========================

} 


\end{document}